\numberwithin{equation}{section}
\newtheorem{lemma}{Lemma}[section]
\newtheorem{proposition}{Proposition}[section]
\newtheorem{theorem}{Theorem}[section]
\newcommand{\xd}{\textrm{d}}
\renewcommand{\div}[0]{\operatorname{div}}
\newcommand{\ep}[0]{\varepsilon}
\newcommand{\R}{\mathbb{R}}
\newcommand{\embed}{\hookrightarrow}
\def\wstarto{\stackrel{*}{\rightharpoonup}}
\title[Local asymptotics for
nonlocal Cahn-Hilliard equations]
{Local asymptotics\\
for nonlocal convective Cahn-Hilliard equations\\ with 
$W^{1,1}$ kernel and singular potential}
\author[Elisa Davoli]{Elisa Davoli}
\address{Institut f\"ur Mathematik, University of Vienna, Oskar-Morgenstern-Platz 1, 1090 Vienna, Austria}
\email{elisa.davoli@univie.ac.at}
\author[Luca Scarpa]{Luca Scarpa}
\address{Institut f\"ur Mathematik, University of Vienna, Oskar-Morgenstern-Platz 1, 1090 Vienna, Austria}
\email{luca.scarpa@univie.ac.at}
\author[Lara Trussardi]{Lara Trussardi}
\address{Institut f\"ur Mathematik, University of Vienna, Oskar-Morgenstern-Platz 1, 1090 Vienna, Austria}
\email{lara.trussardi@univie.ac.at}
\keywords{Nonlocal Cahn-Hilliard equation, 
convection,
well-posedness, nonlocal-to-local convergence, $W^{1,1}$-kernel}
\subjclass[2010]{45K05, 35K25, 35K55, 35B40, 76R05}
\begin{document}

\begin{abstract}
We prove existence of solutions and study the nonlocal-to-local asymptotics for nonlocal, convective, Cahn-Hilliard equations in the case of a $W^{1,1}$ convolution kernel and under homogeneous Neumann conditions. 
Any type of potential, possibly also of double-obstacle or
logarithmic type, is included.
Additionally, we highlight variants and extensions to the setting of periodic boundary conditions and viscosity contributions, as well as connections with the general theory of evolutionary convergence of gradient flows.
\end{abstract}

\maketitle

\tableofcontents

\section{Introduction}
\label{sec:intro}

In this paper we continue the study of the nonlocal-to-local asymptotics of Cahn-Hilliard equations initiated in \cite{MRT18,DRST,DST}. In contrast to the results in \cite{MRT18,DRST,DST}, we focus here on the complementary case of long-range interaction kernels, 
possibly including the Riesz, Newtonian, and Bessel
potentials. In particular, we provide the first nonlocal-to-local convergence result for a class of convective Cahn-Hilliard equations, under homogeneous Neumann boundary conditions, and in the absence of regularizing viscosity terms.\\

 The Cahn-Hilliard equation is considered the pivotal model for spinodal decomposition: an irreversible process occurring in multiple composites (such as alloys, glasses, gels, ceramics, liquid solutions, and polymer solutions), and determined by local fluctuations in the concentrations of mixture components which eventually lead to a decomposition of the material into stable phases. This evolution equation was originally introduced in \cite{CH}, and has since then acquired a key role in materials science due to its aptitude to characterize a variety of different settings, ranging from biology to image reconstruction~\cite{CFM02,CM81,Tre03}.\\

The mathematical analysis of the classical Cahn-Hilliard
equation has been the subject of a very intense research activity in the past decades. Among the extensive literature, we mention the contributions \cite{cher-gat-mir, cher-pet} about existence and uniqueness of solution in domains with nonpermeable walls, the analysis in \cite{cher-mir-zel} including logarithmic double-well potentials, and the works in 
\cite{col-fuk-eqCH,gil-mir-sch} incorporating dynamic boundary conditions and irregular potentials, as well as all the references therein. A detailed analysis on the asymptotic behavior of solutions has been carried out in \cite{col-fuk-diffusion, col-scar} (see also the references therein).  

An augmented Cahn-Hilliard equation including additional convective contributions, and related to stirring of fluids and biological realizations of thin films is the subject of \cite{BDM18}. Relevant studies in coupling the Cahn-Hilliard equation with a further equation for the velocity field have been the focus of \cite{ab-dep-gar,ab-rog,boyer,gal-grass-CHNS}.\\

A key feature of the Cahn-Hilliard equation is the fact that it describes the $H^{-1}$-gradient flow of the Cahn-Hilliard-Modica-Mortola-energy functional, defined as
\begin{equation}
\label{eq:enCH}
\mathcal{E}_{CH}(\varphi):=\frac{1}{2}\int_{\Omega}|\nabla \varphi(x)|^2\,\xd x+\int_{\Omega}F(\varphi(x))\,\xd x,
\end{equation}
where $\varphi:\Omega\to \R$ denotes a concentration parameter, and where $F:\R\to \R$ is a suitable nonlinear double-well potential. With the above notation, and under the assumption of constant mobility (which, for simplicity, we assume identically equal to one), the Cahn-Hilliard equation reads as follows:
\begin{align}\label{eq:CH-general}
\begin{aligned}
    \partial_t \varphi-\Delta\mu
    =0 \qquad&\text{in } (0,T)\times\Omega\,,\\
    \mu\in-\Delta \varphi + \partial F(\varphi) \qquad&\text{in } (0,T)\times\Omega\,,\\
    \partial_{\bf n} \varphi=\partial_{\bf n} \mu=0
    \qquad&\text{in } (0,T)\times\partial\Omega\,,\\
    \varphi(0)=\varphi_{0} \qquad&\text{in } \Omega\,
    \end{aligned}
\end{align}
where $\mu$ represents the chemical potential associated to the energy $\mathcal{E}_{CH}$, $\partial F$ is the subdifferential (in the sense of convex analysis, see, e.g. \cite{barbu-monot}) of the double-well potential $F$, $\varphi_0$ is a suitable initial datum, $T>0$ is a fixed final time, and $\Omega$ is a smooth bounded domain in $\mathbb{R}^d$ (with $d=2,3)$. \\

In the early 90's a nonlocal counterpart of the above equation has been introduced by G. Giacomin and J. Lebowitz in \cite{GL} in order to provide a microscopical model of a $d$-dimensional lattice gas evolving via a Poisson nearest-neighbor process. The corresponding nonlocal Cahn-Hilliard equation is introduced as gradient flow of the nonlocal energy functional
\begin{equation}
    \label{NLenergy}
    \mathcal{E}^{NL}_\ep(\varphi) = \frac{1}{4}\int_\Omega\int_\Omega J_\ep(x,y)|\varphi(x)-\varphi(y)|^2\,\xd x\,\xd y+\int_\Omega F(\varphi(x))\,\xd x.
\end{equation}
 where, for every $\ep>0$, $J_\ep(x,y)$ is a positive and symmetric convolution kernel.

 The connection between the nonlocal and local gradient flows can be seen by a formal computation: in the case in which $J_{\ep}(x,y)=J_{\ep}(|x-y|)$ is suitably regular and concentrates around the origin as $\ep\to 0$, the nonlocal interface evolution approaches that in \eqref{eq:CH-general}. This observation is supported by the rigorous variational analysis of the asymptotic behavior of \eqref{NLenergy} as $\ep$ tends to zero. Indeed a whole nonlocal-to-local framework for functionals in the form \eqref{NLenergy} has been developed in the seminal papers by J. Bourgain, H. Brezis, and P. Mironescu \cite{BBM, BBM2}, as well as in the $\Gamma$-convergence analysis carried out by A.C. Ponce in \cite{ponce04, ponce}.\\

The interest in these nonlocal formulations is motivated by the fact that they exhibit a closer connection, compared to local models, to atomistic descriptions, and provide the ideal tool to describe pattern-formation phenomena.
The main novelty with respect to 
the local models is the presence of the possible long-range interaction kernel $J_\ep$, taking into account also the interaction between particles at a large scale (let's say $\ep^{-1}$).
As a result, nonlocal Cahn-Hilliard equations find applications in multiple settings, ranging from the modeling of tumor growth \cite{frig-lam-roc,roc-spr}, to the mechanisms describing phase transitions in polymer blends.

The recent years have witnessed an intense and 
increasing research activity on nonlocal 
Cahn-Hilliard equations:
we point out in this direction 
the contributions
\cite{ab-bos-grass-NLCH,
bat-han-NLCH,
gal-gior-grass-NLCH,
gal-grass-NLCH,han-NLCH}
and the references therein.
The main assumption on the interaction kernel 
in such studies is that $J_\ep$ is 
symmetric and of class $W^{1,1}$.
This is well motivated both in the direction of 
applications to diffuse interface modelling and 
from a mathematical perspective as well.
Indeed, on the one hand these assumptions allow
to consider all relevant examples of 
Newtonian and Bessel potentials, and on the other
hand they ensure enough integrability and regularity
on the 
solutions to the corresponding nonlocal evolutions
as it is expected in diffuse interface models.
\\

The variational convergence of the nonlocal energy 
to the local one gives rise naturally to 
the study of the asymptotics of the 
corresponding evolution problems. 
Such analysis has been initiated in some previous 
works of ours in different settings.
In \cite{MRT18}, assuming existence of the nonlocal evolution, the authors have shown nonlocal-to-local convergence of the Cahn-Hilliard equations in the case of polynomial double-well potentials satisfying a further concavity assumption. Very recently, in \cite{DRST, DST}, we have provided a first existence result for solutions to the nonlocal Cahn-Hilliard equations for singular kernels not falling within the $W^{1,1}$-existence theory and under possible degeneracy of the double-well potential. We worked under the assumption of constant mobility, first in the case of periodic boundary conditions, and afterwards with Neumann boundary conditions in the viscous (and vanishing viscosity) setting. Additionally, in \cite{DRST, DST} we have shown convergence to the classical Cahn-Hilliard equation, as the singular kernels $\{J_\ep\}_\ep$ concentrate around the origin (as $\ep\to 0$) and approach a Dirac delta. 
We point out that, aside from the two recent works \cite{SS1,SS2} dealing with the fractional Cahn-Hilliard equation, and from the general setting introduced in \cite{Gal18} (both not directly falling within our setting), the two papers \cite{DRST, DST} are the first contributions dealing with the case of non-regular interaction kernels.\\

The focus of this paper is on a counterpart to  the analysis in \cite{DRST, DST}. 
Indeed, in the mentioned papers we were forced
to choose interaction kernels with a singularity of 
order $2$ in the origin in order to guarantee
at least a suitable convergence of the energies.
Clearly, in dimension two and three such choice
does not ensure $W^{1,1}$ regularity.
As a consequence, the local asymptotics 
of nonlocal Cahn-Hilliard equations with 
$W^{1,1}$ kernels is currently an open problem:
we give here a first positive answer in this direction.
Indeed, we provide a full characterization of existence and nonlocal-to-local asymptotics in the case of singular double-well potential, interaction kernels satisfying $W^{1,1}$ integrability assumptions, and in the presence of a further convection term. 
The main idea of the work is to propose a different scaling of the interaction kernels, allowing 
to obtain both a suitable Gamma convergence of the energies and the required $W^{1,1}$ regularity.
From a modeling point of view, the additional regularity of the interaction kernel corresponds to enhancing the effects of long-range interactions
and the diffuse interface nature of the model. 
From a mathematical perspective, the $W^{1,1}$ integrability of $J_\ep$ allows to streamline all a-priori estimates involving integrations by parts. As a by-product, we are able here to provide the first nonlocal-to-local convergence result for a Cahn-Hilliard model involving homogeneous Neumann boundary conditions, and without any additional regularizing viscous terms
(as it was necessary in \cite{DST}).\\

In order to present our main results we need to introduce some basic notation.
We introduce below both the nonlocal system that we consider in this paper
\begin{align}
    \label{eq1_NL}
    \partial_t u_\ep-\Delta\mu_\ep
    =-\div (u_\ep{\bf v}) \qquad&\text{in } (0,T)\times\Omega\,,\\
    \label{eq2_NL}
    \mu_\ep\in(J_\ep*1)u_\ep - J_\ep*u_\ep + \gamma(u_\ep)
    +\Pi(u_\ep) \qquad&\text{in } (0,T)\times\Omega\,,\\
    \label{eq3_NL}
    \partial_{\bf n} \mu_\ep=0
    \qquad&\text{in } (0,T)\times\partial\Omega\,,\\
    \label{eq4_NL}
    u_\ep(0)=u_{0,\ep} \qquad&\text{in } \Omega\,,
\end{align}
and its local counterpart
\begin{align}
    \label{eq1_L}
    \partial_t u-\Delta\mu
    =-\div (u{\bf v}) \qquad&\text{in } (0,T)\times\Omega\,,\\
    \label{eq2_L}
    \mu\in-\Delta u + \gamma(u)
    +\Pi(u) \qquad&\text{in } (0,T)\times\Omega\,,\\
    \label{eq3_L}
    \partial_{\bf n} u=\partial_{\bf n} \mu=0
    \qquad&\text{in } (0,T)\times\partial\Omega\,,\\
     \label{eq4_L}
    u(0)=u_{0} \qquad&\text{in } \Omega.
\end{align}
Here $\gamma$ is a maximal monotone graph, and $\Pi$ is a Lipschitz map. Together, they form the subdifferential of a suitable double-well potential $F$. The map $\bf v$ represents a velocity field. The parameter $\ep$ identifies, roughly speaking, the amplitude of the range of nonlocal interactions. We refer to Section~\ref{sec:main} below for the precise regularity assumptions.\\

Our main results are the following. First, in Theorem \ref{thm1} we prove well-posedness of the system in \eqref{eq1_NL}--\eqref{eq4_NL}. Second, in Theorem \ref{thm2}, relying on the a-priori estimates identified in the proof of Theorem \ref{thm1}, we analyze nonlocal-to-local asymptotics. Our proof strategy is quite general: we collect in Section~\ref{sec:appl} both an application to the case of periodic boundary conditions, and an analysis of the viscous Cahn-Hilliard case, in which some regularity assumptions on the velocity field $\textbf{v}$ can be relaxed.
Due to the presence of the velocity field, our setting does not follow within the general theory of evolutionary $\Gamma$-convergence for gradient flows developed in \cite{sand-serf-2, sand-serf}. Additionally, even in the absence of convective contributions, our general assumptions on the double-well potential make the energy functional not $C^1$, thus situating our analysis outside the classical theory in \cite{sand-serf-2, sand-serf} but rather closer to the abstract metric framework in \cite{serf}. We devote the last section of this paper to show how some crucial estimates in the proof strategy of Theorems \ref{thm1} and \ref{thm2} directly relate to this general methodology.\\ 

The assumptions on the double-well potentials $F$ considered in our formulation are quite general. One the one hand, in fact, our class of double-well potentials includes the classical choice for $F$ as the fourth-order polynomial 
$F_{\rm pol}(r):=\frac14(r^2-1)^2$, $r\in\mathbb{R}$, with 
minima in $\pm1$ (corresponding to the pure phases). On the other hand, it also incorporated logarithmic 
double-well potentials, such as
$$F_{\log}(s)=\frac{\theta}{2}((1+s)\log(1+s)+(1-s)\log(1-s))-\frac{\theta_c}{2}s^2$$
for $0<\theta<\theta_c$, which by contrast is defined on
the bounded domain $(-1,1)$ and possesses minima within the open interval $(-1,1)$, and so-called double-obstacle potential (see \cite{BE, OoP}), having the form
$$
F_{\rm ob}(s)=I_{[-1,1]}(s)+\frac12 (1-s^2),\quad 
I_{[-1,1]}(s):=
\begin{cases}
0&\text{if }s\in [-1,1]\\+\infty&\text{otherwise}\,.
\end{cases}
$$
\\

The system in \eqref{eq1_NL}--\eqref{eq4_NL} is additionally driven by a convection term in divergence form, destroying the gradient-flow structure of the equation. Cahn-Hilliard diffusions under the action of convective terms play a central role both in the modeling of mixing and stirring of fluids, and in biological thin-films deposition via Langmuir-Blodgett transfer \cite{BL, Lang}. We recall here the recent works
\cite{BDM18,col-gil-spr-CHconv,EKZ13,WORD03} on
local Cahn-Hilliard models with convection,
\cite{della-grass,DPG16,roc-spr} studying nonlocal Cahn-Hilliard under local convection, and \cite{Ei87,IR07} on a nonlocal model with stellar convection. Phase separation in nonlocal convective Cahn-Hilliard systems is the subject of \cite{col-gil-spr-applTych, col-gil-spr-OCNLphase}. Further couplings of Cahn-Hilliard equations under evolving velocity fields have been analyzed in \cite{ab-dep-gar,ab-rog,boyer,gal-grass-CHNS}.\\

The paper is organized as follows: Section~\ref{sec:main} contains the mathematical setting and the precise statements of the main results. Section~\ref{sec:prelim} focuses on some useful preliminary results.
Section~\ref{sec:nonlocal} is devoted to the proof of well-posedness of the nonlocal system~\eqref{eq1_NL}--\eqref{eq4_NL}, while Section~\ref{sec:nonlocal-to-local} focuses on nonlocal-to-local asymptotics. Eventually, Section~\ref{sec:appl} and Section~ \ref{sec:SS} contain some generalizations of the results, and highlight their connections with evolutionary $\Gamma$-convergence of gradient flows, respectively. The main contributions of the paper are summarized in Section~\ref{sec:con}.

\section{Setting and main result}
\label{sec:main}

Throughout the paper, 
$\Omega\subset \mathbb{R}^d$ is a smooth bounded domain,
with $d=2,3$, and $T>0$ is a fixed final time.
We set $Q:=(0,T)\times\Omega$ and $Q_t:=(0,t)\times\Omega$ for all $t\in(0,T)$.
We define the functional spaces
\[
  H:=L^2(\Omega)\,, \quad
  V:=H^1(\Omega)\,, \quad V_0:=H^1_0(\Omega)\,,\quad
  W:=\{\varphi\in H^2(\Omega): \partial_{\bf n}\varphi=0
  \text{ a.e.~on } \partial\Omega\}\,,
\]
endowed with their natural norms $\|\cdot\|_H$,
$\|\cdot\|_V$, $\|\cdot\|_{V_0}$, 
and $\|\cdot\|_W$, respectively.
The duality pairing between $V^*$ and $V$
and the scalar product in $H$
will be denoted by $\langle\cdot,\cdot\rangle_V$
and $(\cdot, \cdot)_H$, respectively.
We also recall that the inverse of the $-\Delta$
 operator with 
homogeneous Neumann conditions
is a well-defined isomorphism
\[
  \mathcal N:\{\varphi\in V^*: \varphi_\Omega=0\}\to\{\varphi\in V:\varphi_\Omega=0\}\,, 
\]
where $\varphi_\Omega:=
\frac1{|\Omega|}
\langle\varphi,1\rangle$ 
for every $\varphi\in V^*$.

Throughout the work, we will consider the following assumptions.
\begin{description}
  \item[H1] $\alpha\in(0,d-1)$ is a fixed real number.
  \item[H2] $\rho:[0,+\infty)\to[0,+\infty)$
  is of class $C^1$, and such that 
  \[
  s\mapsto |\rho'(s)|
  s^{d-1-\alpha} \in L^1(\mathbb{R_+}),
  \]
  with the renormalization
  \[
  \int_0^{+\infty}\rho(s)s^{d+1-\alpha}\,\xd s=
  \frac2{C_d}\,, \qquad 
  C_d:=\int_{S^{d-1}}|\sigma\cdot e_1|^2\,\xd \mathcal{H}^{d-1}(\sigma)\,.
  \]
  The family of mollifiers $(\rho_\ep)_{\ep>0}$ is defined as
  \[
  \rho_\ep(r):=\frac1{\ep^d}\rho(r/\ep)\,, \quad r\geq0\,, \quad\ep>0\,.
  \]
  The convolution kernel $J_\ep:\mathbb{R}^d\to\mathbb R$ is given by
  \[
  J_\ep(z):=\frac1{\ep^{2-\alpha}}\rho_\ep(|z|)
  \frac1{|z|^\alpha}\,,
  \quad z\in\mathbb{R}^d\,,
  \]
  and we set 
  \[
  (J_\ep*\varphi)(x):=\int_\Omega J_\ep(x-y)\varphi(y)\,\xd y\,,
  \quad x\in\Omega\,,\quad\varphi\in L^1(\Omega)\,.
  \]
  \item[H3] $\gamma:\mathbb{R}\to 2^{\mathbb R}$ is a
  maximal monotone graph with $0\in\gamma(0)$, and 
  $\hat\gamma:\mathbb R\to [0,+\infty]$ is the unique 
  proper, convex and lower semicontinuous function
  such that $\partial\hat\gamma=\gamma$ and
  $\hat\gamma(0)=0$. For every $\lambda>0$, 
  the Yosida approximation of $\gamma$ is denoted 
  by $\gamma_\lambda:\mathbb R\to\mathbb R$.
  Moreover, $\Pi:\mathbb R\to \mathbb R$ is
  Lipschitz-continuous, and we set  
  $\hat\Pi:\mathbb R\to \mathbb R$ as
  $\hat\Pi(r):=\int_0^r\Pi(s)\,\xd s$, $r\in \mathbb R$.
  We will assume, with no loss of generality, that 
  $\hat\gamma + \hat\Pi \geq 0$, and that for every $\ep>0$
  there exist $c_\ep^0,\lambda_\ep^0>0$
  such that 
  \[
  \gamma_\lambda'(r)+\Pi'(r)+
  (J_\ep*1)(x) \geq c_\ep^0
  \quad\text{for a.e.~}(r,x)
  \in\mathbb R\times\Omega\,,\quad
  \forall\,\lambda\in(0,\lambda_\ep^0)\,.
  \]
  \item[H4] ${\bf v}\in L^2(0,T; (L^\infty(\Omega)\cap V_0)^d)$.
\end{description}
Assumption~\textbf{H1} is needed in order to guarantee that the kernel is of class $W^{1,1}$ in $\Omega$.
The assumptions on the mollifiers stated in~\textbf{H2} correspond to the requirements in~\cite{ponce04, ponce}; assumption~\textbf{H3} includes any type of double-well potential $F$ represented by $\hat{\gamma}+\hat{\Pi}$ such as the logarithmic, the the fourth-order polynomial but also the double-obstacle potential; finally we observe that our analysis includes quite general velocity fields $\bf v$, possibly varying both in space and time.

The local limiting energy functional is defined as
\[
  E:H\to[0,+\infty]\,,\qquad
  E(\varphi):=
  \begin{cases}
  \frac12\int_\Omega|\nabla\varphi(x)|^2\,\xd x
  \quad&\text{if } \varphi\in V\,,\\
  +\infty \quad&\text{if } \varphi\in H\setminus V\,.
  \end{cases}
\]
Furthermore, thanks to the assumption {\bf H2}, 
it follows that 
$J_\ep\in W^{1,1}(\mathbb{R}^d)$
for every $\ep>0$
(see Lemma~\ref{lem:rho} below). 
This implies in particular that 
the nonlocal energy satisfies
\begin{equation}
    \label{eq:nl-en}
  E_\ep(\varphi):=\frac14
  \int_{\Omega\times\Omega}
  J_\ep(x-y)|\varphi(x)-\varphi(y)|^2\,\xd x\,\xd y<+\infty
  \qquad\forall\,\varphi\in H\,.
\end{equation}
For further details we refer to 
Section~\ref{sec:prelim}.

The local problem is well-posed
(see \cite{col-gil-spr, col-gil-spr-CHconv}) in the 
following sense.
\begin{theorem}
  \label{thm_local}
  For every $u_{0}\in V$ such that $\hat\gamma(u_{0})\in L^1(\Omega)$
  and $(u_{0})_\Omega\in\operatorname{Int}
  D(\gamma)$, 
  there exists a triplet $(u, \mu, \xi)$,
  where $u$ is uniquely determined,
  such that 
  \begin{align*}
      &u \in H^1(0,T; V^*)
      \cap L^\infty(0,T; V)
      \cap L^2(0,T; W)\,,\\
      &\mu \in L^2(0,T; V)\,,\\
      &\xi\in L^2(0,T; H)\,,\\
      &u(0)=u_{0}\\
      &\mu=-\Delta u + \xi+
      \Pi(u)\,,\\
      &\xi\in\gamma(u) \quad\text{a.e.~in } Q\,,
  \end{align*}
  and
  \[
  \langle\partial_t u,\varphi\rangle_V + 
  \int_\Omega\nabla\mu(x)\cdot\nabla\varphi(x)\,\xd x = 
  \int_\Omega u(x){\bf v}(x)\cdot\nabla\varphi(x)\,\xd x
  \quad\forall\,\varphi\in V\,,
  \quad\text{a.e.~in } (0,T)\,.
  \]
\end{theorem}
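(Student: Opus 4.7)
My plan is to establish existence via a Yosida regularization of $\gamma$ combined with a Faedo--Galerkin approximation, pass to the limit using monotonicity and compactness, and obtain uniqueness by a duality argument based on $\mathcal{N}$. For each $\lambda\in(0,\lambda_0)$, replace $\gamma$ by its Yosida approximation $\gamma_\lambda$, which is Lipschitz, and consider the regularized system on the finite-dimensional space spanned by the first $n$ eigenfunctions $\{e_j\}_j$ of $-\Delta$ with homogeneous Neumann boundary conditions (which lie in $W$). Existence for the resulting ODE system follows from Cauchy--Lipschitz on a maximal interval, and the subsequent a priori estimates will show the solution is global.

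For the a priori estimates, first test the equation for $u^{n,\lambda}$ against $\mu^{n,\lambda}$ and $\mu^{n,\lambda}$ against $\partial_t u^{n,\lambda}$, using $\gamma_\lambda=\hat\gamma_\lambda'$, to obtain the energy identity
\begin{equation*}
\frac{d}{dt}\Bigl(\tfrac12\|\nabla u^{n,\lambda}\|_H^2+\!\!\int_\Omega\!\!\hat\gamma_\lambda(u^{n,\lambda})+\hat\Pi(u^{n,\lambda})\Bigr)+\|\nabla\mu^{n,\lambda}\|_H^2=\int_\Omega u^{n,\lambda}\,{\bf v}\cdot\nabla\mu^{n,\lambda}.
\end{equation*}
The convective term is controlled by Young's inequality, $\|{\bf v}\|_{L^\infty}$, and the Lipschitz continuity of $\Pi$ combined with a Gronwall argument, yielding uniform bounds on $u^{n,\lambda}$ in $L^\infty(0,T;V)$ and $\mu^{n,\lambda}$ in $L^2(0,T;H^1(\Omega)/\mathbb R)$. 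Mass conservation $(u^{n,\lambda})_\Omega=(u_0)_\Omega$ together with $(u_0)_\Omega\in\operatorname{Int}D(\gamma)$ permits the standard trick of testing the identity $-\Delta u^{n,\lambda}+\gamma_\lambda(u^{n,\lambda})=\mu^{n,\lambda}-\Pi(u^{n,\lambda})$ with $\gamma_\lambda(u^{n,\lambda})-(\gamma_\lambda(u^{n,\lambda}))_\Omega$ to bound $\gamma_\lambda(u^{n,\lambda})$ in $L^2(0,T;H)$, and hence by elliptic regularity to bound $u^{n,\lambda}$ in $L^2(0,T;W)$. Finally, the continuity equation for $\partial_t u^{n,\lambda}$ gives a bound in $L^2(0,T;V^*)$ using ${\bf v}\in L^2(L^\infty)$.

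With these uniform estimates, I would first pass to the limit $n\to\infty$ using Aubin--Lions (the strong convergence of $u^{n,\lambda}$ in $L^2(Q)$ is needed to identify $\Pi(u^{n,\lambda})$ and to close the monotone term via lower semicontinuity of $\int_0^T\!\!\int_\Omega \hat\gamma_\lambda(u^{n,\lambda})$), and then $\lambda\to 0$ relying on the fact that $\gamma_\lambda$ is maximal monotone, so weak convergence of $\xi_\lambda:=\gamma_\lambda(u_\lambda)$ together with strong convergence of $u_\lambda$ allow identification of the limit $\xi\in\gamma(u)$ in the sense of graphs. For uniqueness, given two solutions $(u_i,\mu_i,\xi_i)$ with the same mean, test the difference of the PDEs for $w:=u_1-u_2$ against $\mathcal N w$; the gradient-flow part produces a dissipative contribution $-(\xi_1-\xi_2,w)_H\le 0$ by monotonicity and a $-\|\nabla w\|_H^2$ from $-\Delta u$, while the convective term $(w{\bf v},\nabla\mathcal N w)_H$ is absorbed by Hölder, the $L^\infty$-bound on ${\bf v}$, and Young's inequality, so that Gronwall yields $w\equiv 0$.

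The main obstacle is the interplay between the convection term and the singular potential: the convective contribution breaks the gradient-flow structure and must be controlled through the sole regularity ${\bf v}\in L^2(0,T;L^\infty\cap V_0)^d$, which is why the Neumann (divergence-free-like) condition ${\bf v}\in V_0^d$ is crucial to avoid boundary contributions in integrations by parts against $\mu$; matching this compatibility with the maximal monotone framework is precisely the point where the cited references~\cite{col-gil-spr,col-gil-spr-CHconv} provide the technical machinery that closes the argument.
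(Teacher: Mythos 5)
First, note that the paper itself does not prove Theorem~\ref{thm_local}: it is quoted from \cite{col-gil-spr, col-gil-spr-CHconv}. The natural comparison is therefore with the scheme of those references and with the paper's own proof of the nonlocal analogue, Theorem~\ref{thm1}, in Section~\ref{sec:nonlocal}. Your overall strategy (Yosida regularization of $\gamma$ plus Galerkin, the cross-testing energy identity against $\mu$ and $\partial_t u$, control of the convective term via $\|{\bf v}\|_{L^\infty}$ and Gronwall, Aubin--Lions and maximal monotonicity to pass to the limit, and uniqueness by testing the difference against $\mathcal N w$) is exactly that standard scheme, and the uniqueness argument you sketch is sound.

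There is, however, one concrete gap in the chain of a priori estimates: the control of the \emph{means} of $\mu^{n,\lambda}$ and of $\gamma_\lambda(u^{n,\lambda})$. The energy identity only bounds $\nabla\mu^{n,\lambda}$ in $L^2(0,T;H)$, and testing $-\Delta u^{n,\lambda}+\gamma_\lambda(u^{n,\lambda})=\mu^{n,\lambda}-\Pi(u^{n,\lambda})$ by $\gamma_\lambda(u^{n,\lambda})-(\gamma_\lambda(u^{n,\lambda}))_\Omega$, as you propose, only yields
\[
\|\gamma_\lambda(u^{n,\lambda})-(\gamma_\lambda(u^{n,\lambda}))_\Omega\|_H\leq \|\mu^{n,\lambda}\|_H+\|\Pi(u^{n,\lambda})\|_H\,,
\]
i.e.\ a bound on the \emph{oscillation} of $\gamma_\lambda(u^{n,\lambda})$ in terms of the full $H$-norm of $\mu^{n,\lambda}$ --- which itself requires $(\mu^{n,\lambda})_\Omega=(\gamma_\lambda(u^{n,\lambda}))_\Omega+(\Pi(u^{n,\lambda}))_\Omega$ to be controlled. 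As written the argument is circular: neither mean is ever estimated. The hypothesis $(u_0)_\Omega\in\operatorname{Int}D(\gamma)$ must enter through a different test function, namely by testing the first equation by $\mathcal N(u^{n,\lambda}-(u_0)_\Omega)$ and the second by $u^{n,\lambda}-(u_0)_\Omega$ (precisely as the paper does in \eqref{eq:est-extra2} for the nonlocal problem), and using the inequality $\int_\Omega\gamma_\lambda(u)(u-(u_0)_\Omega)\geq c\|\gamma_\lambda(u)\|_{L^1(\Omega)}-c'$ valid for mean values compactly contained in $\operatorname{Int}D(\gamma)$. This gives the $L^1$ bound on $\gamma_\lambda(u^{n,\lambda})$, hence the bound on $(\mu^{n,\lambda})_\Omega$ and the full $L^2(0,T;V)$ bound on $\mu^{n,\lambda}$; only then does testing the chemical-potential identity by $\gamma_\lambda(u^{n,\lambda})$ itself (not its zero-mean part) yield the $L^2(0,T;H)$ bound on $\gamma_\lambda(u^{n,\lambda})$ and, via elliptic regularity, $u^{n,\lambda}\in L^2(0,T;W)$. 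With that correction the rest of your argument goes through.
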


The two main results of this paper are
the following: the first one deals with the existence and uniqueness of solutions to the nonlocal problem for $\ep$ fixed; the latter concerns the nonlocal-to-local convergence. 
\begin{theorem}
  \label{thm1}
  Let $\ep>0$ be fixed. Then, under assumptions \textbf{H1}--\textbf{H4},
  for every $u_{0,\ep}\in H$ such that $\hat\gamma(u_{0,\ep})\in L^1(\Omega)$
  and $(u_{0,\ep})_\Omega\in\operatorname{Int}
  D(\gamma)$, 
  there exists a triplet $(u_\ep, \mu_\ep, \xi_\ep)$, where $u_\ep$ is uniquely 
  determined,
  such that 
  \begin{align*}
      &u_\ep \in H^1(0,T; V^*)
      \cap L^2(0,T; V)\,,\\
      &\mu_\ep \in L^2(0,T; V)\,,\\
      &\xi_\ep\in L^2(0,T; V)\,,\\
      &u_\ep(0)=u_{0,\ep}\\
      &\mu_\ep=(J_\ep*1)u_\ep - J_\ep*u_\ep + \xi_\ep+
      \Pi(u_\ep)\,,\\
      &\xi_\ep\in\gamma(u_\ep) \quad\text{a.e.~in } Q\,,
  \end{align*}
  and
  \begin{equation}
  \label{eq:grad-flow}
  \langle\partial_t u_\ep,\varphi\rangle_V + 
  \int_\Omega\nabla\mu_\ep(x)\cdot\nabla\varphi(x)\,\xd x = 
  \int_\Omega u_\ep(x){\bf v}(x)\cdot\nabla\varphi(x)\,\xd x
  \quad\forall\,\varphi\in V\,,
  \quad\text{a.e.~in } (0,T)\,.
  \end{equation}
\end{theorem}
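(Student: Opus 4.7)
Fix $\ep>0$. The plan is to approximate the possibly multivalued graph $\gamma$ by its Yosida regularization $\gamma_\lambda$ for $\lambda\in(0,\lambda_\ep^0)$, prove existence for the corresponding regularized problem, establish a priori estimates uniform in $\lambda$, and then pass to the limit $\lambda\to 0^+$ using Aubin--Lions compactness together with the strong--weak closedness of maximal monotone operators. For the regularized system, $\gamma_\lambda$ is Lipschitz and the coefficient $\gamma_\lambda'+\Pi'+(J_\ep*1)\geq c_\ep^0>0$ by \textbf{H3}; inserting the explicit identity
\[
\mu_{\ep,\lambda}=(J_\ep*1)u_{\ep,\lambda}-J_\ep*u_{\ep,\lambda}+\gamma_\lambda(u_{\ep,\lambda})+\Pi(u_{\ep,\lambda})
\]
into \eqref{eq1_NL} reduces the problem to a uniformly parabolic quasilinear equation whose well-posedness follows from a standard Faedo--Galerkin scheme combined with a fixed-point argument for the convective contribution, in the spirit of \cite{col-gil-spr-CHconv}. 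The $W^{1,1}$ regularity of $J_\ep$ (cf.~Lemma~\ref{lem:rho}) ensures that convolution with $J_\ep$ maps $H$ continuously into $V$, so that the nonlocal operator fits naturally into this classical framework.

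\textbf{Uniform estimates.} Four uniform-in-$\lambda$ estimates are required. First, testing \eqref{eq:grad-flow} against $\varphi\equiv 1$ yields mass conservation $(u_{\ep,\lambda}(t))_\Omega=(u_{0,\ep})_\Omega$, since both diffusive and convective terms vanish against $\nabla 1\equiv 0$. Second, testing \eqref{eq:grad-flow} against $\mu_{\ep,\lambda}$ and using the chain rule for $E_\ep+\int_\Omega(\hat\gamma_\lambda+\hat\Pi)(u_{\ep,\lambda})$ produces an energy identity in which the convective term $\int_\Omega u_{\ep,\lambda}\mathbf{v}\cdot\nabla\mu_{\ep,\lambda}$ is absorbed by Young's inequality; closing the remaining $L^2$-term via mass conservation and a nonlocal Poincar\'e-type bound (valid for $\ep$ fixed thanks to the lower bound on $J_\ep*1$) and invoking Gronwall yields uniform bounds on $u_{\ep,\lambda}$ in $L^\infty(0,T;H)$, on $E_\ep(u_{\ep,\lambda})$ in $L^\infty(0,T)$, and on $\nabla\mu_{\ep,\lambda}$ in $L^2(0,T;H)$. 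Third, testing the identity for $\mu_{\ep,\lambda}$ against $\xi_{\ep,\lambda}:=\gamma_\lambda(u_{\ep,\lambda})$, using monotonicity and the $L^\infty$-bound on $J_\ep*1$ together with Young's inequality for convolutions, bounds $\xi_{\ep,\lambda}$ in $L^2(0,T;H)$; this in turn gives $\mu_{\ep,\lambda}\in L^2(0,T;V)$. Finally, differentiating in $x$ the identity for $\mu_{\ep,\lambda}$,
\[
\bigl[(J_\ep*1)+\gamma_\lambda'(u_{\ep,\lambda})+\Pi'(u_{\ep,\lambda})\bigr]\nabla u_{\ep,\lambda}=\nabla\mu_{\ep,\lambda}-u_{\ep,\lambda}\nabla(J_\ep*1)+(\nabla J_\ep)*u_{\ep,\lambda},
\]
and inverting the left-hand coefficient via the positivity in \textbf{H3}, delivers a uniform $L^2(0,T;V)$ bound on $u_{\ep,\lambda}$, and hence on $\xi_{\ep,\lambda}$ as well. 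A comparison in \eqref{eq:grad-flow} yields $\partial_t u_{\ep,\lambda}\in L^2(0,T;V^*)$ uniformly.

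\textbf{Passage to the limit and uniqueness.} The estimates above combined with Aubin--Lions imply $u_{\ep,\lambda}\to u_\ep$ strongly in $L^2(0,T;H)$ and a.e.~in $Q$, while $\mu_{\ep,\lambda},\xi_{\ep,\lambda}\rightharpoonup\mu_\ep,\xi_\ep$ weakly in $L^2(0,T;V)$; the inclusion $\xi_\ep\in\gamma(u_\ep)$ a.e.~in $Q$ is then recovered from the strong--weak closedness of maximal monotone graphs \cite{barbu-monot}. The convolution terms pass to the limit by continuity, $\Pi(u_{\ep,\lambda})\to\Pi(u_\ep)$ strongly by Lipschitzianity, and equation \eqref{eq:grad-flow} is obtained in the limit. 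For uniqueness of $u_\ep$: given two solutions $u^1,u^2$ with the same initial datum, the difference $w:=u^1-u^2$ satisfies $w_\Omega\equiv 0$ by mass conservation, so it is legitimate to test the difference of the variational equations against $\mathcal N w\in V$; the contribution of the subdifferential is nonnegative by monotonicity of $\gamma$, the convolution and Lipschitz terms are absorbed by Young's inequality, and the convection term is controlled via integration by parts using $\mathbf{v}\in L^\infty$. A Gronwall argument then forces $w\equiv 0$. The main obstacle is reconciling the \emph{singular/multivalued} character of $\gamma$, which demands Yosida regularization and delicate monotonicity-closedness arguments, with the convection $-\div(u_\ep\mathbf{v})$, which destroys the gradient-flow structure and forces Gronwall-type absorption throughout; in addition, the $L^2(0,T;V)$ regularity of $\xi_\ep$---essential for the subsequent nonlocal-to-local analysis in Theorem~\ref{thm2}---relies crucially on the $\ep$-dependent coercivity in \textbf{H3} together with the $W^{1,1}$ regularity of $J_\ep$.
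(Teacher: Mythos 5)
Your overall architecture (Yosida regularization, $\lambda$-uniform estimates, Aubin--Lions, strong--weak closedness of $\gamma$) coincides with the paper's, and your first, second and fourth estimates (mass conservation; the energy estimate obtained by testing with $\mu_{\ep\lambda}$ and $\partial_t u_{\ep\lambda}$, closed via the nonlocal Poincar\'e inequality and Gronwall; the $L^2(0,T;V)$ bound on $u_{\ep\lambda}$ obtained by differentiating the identity for $\mu_{\ep\lambda}$ and using the coercivity in \textbf{H3}) are exactly the ones in the paper. However, your third estimate contains a genuine gap. Testing the identity for $\mu_{\ep\lambda}$ against $\xi_{\ep\lambda}=\gamma_\lambda(u_{\ep\lambda})$ produces the term $\int_Q\mu_{\ep\lambda}\xi_{\ep\lambda}$, and at that stage only $\nabla\mu_{\ep\lambda}$ is controlled in $L^2(0,T;H)$ --- the spatial mean $(\mu_{\ep\lambda})_\Omega$ is not. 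Since the nonlocal term integrates to zero over $\Omega$ by symmetry of $J_\ep$, one has $(\mu_{\ep\lambda})_\Omega|\Omega|=\int_\Omega\xi_{\ep\lambda}+\int_\Omega\Pi(u_{\ep\lambda})$, so the problematic contribution is
\[
(\mu_{\ep\lambda})_\Omega\int_\Omega\xi_{\ep\lambda}
=\frac1{|\Omega|}\Bigl(\int_\Omega\xi_{\ep\lambda}\Bigr)^{2}+\frac1{|\Omega|}\int_\Omega\Pi(u_{\ep\lambda})\int_\Omega\xi_{\ep\lambda}\,,
\]
whose first term can be of the same order as $\|\xi_{\ep\lambda}\|_H^2$ (e.g.\ when $\xi_{\ep\lambda}$ is close to a large constant, which is precisely the dangerous regime for singular potentials) and therefore cannot be absorbed into the left-hand side. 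Your argument is circular: to bound $\xi_{\ep\lambda}$ in $L^2(0,T;H)$ you need $\mu_{\ep\lambda}\in L^2(0,T;H)$, and to control $(\mu_{\ep\lambda})_\Omega$ you need an a priori bound on $\int_\Omega\gamma_\lambda(u_{\ep\lambda})$.

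The missing ingredient is the step where the hypothesis $(u_{0,\ep})_\Omega\in\operatorname{Int}D(\gamma)$ is actually used --- it never appears in your estimates, which is the tell-tale sign. The paper tests \eqref{eq1_app} by $\mathcal N(u_{\ep\lambda}-(u_{0,\ep})_\Omega)$ and \eqref{eq2_app} by $u_{\ep\lambda}-(u_{0,\ep})_\Omega$; since the mean value $(u_{0,\ep})_\Omega$ lies in the interior of $D(\gamma)$, one has the classical inequality $\int_\Omega\gamma_\lambda(u_{\ep\lambda})(u_{\ep\lambda}-(u_{0,\ep})_\Omega)\geq c_\ep\|\gamma_\lambda(u_{\ep\lambda})\|_{L^1(\Omega)}-c_\ep'$, which yields $\|\gamma_\lambda(u_{\ep\lambda})\|_{L^2(0,T;L^1(\Omega))}\leq M_\ep$, hence a bound on $(\mu_{\ep\lambda})_\Omega$ in $L^2(0,T)$ by integrating \eqref{eq2_app} over $\Omega$, and finally $\mu_{\ep\lambda}\in L^2(0,T;V)$. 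Only after this (and after the $V$-bound on $u_{\ep\lambda}$) does one recover $\gamma_\lambda(u_{\ep\lambda})\in L^2(0,T;V)$ by comparison in \eqref{eq2_app}; no test against $\xi_{\ep\lambda}$ is needed at the $\lambda$-level (that test is used in the paper only in the proof of Theorem~\ref{thm2}, where $\mu_\ep$ is already uniformly bounded in $L^2(0,T;V)$). Your uniqueness sketch via $\mathcal N w$ is consistent with the argument the paper delegates to \cite{DRST,DST}.
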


\begin{theorem}
  \label{thm2}
  Assume \textbf{H1}--\textbf{H4}, 
  let $(u_{0,\ep})_{\ep>0}\subset H$, and let
  $u_0\in V$
  be such that 
  \begin{align*}
      &\sup_{\ep>0}\left(
      E_\ep(u_{0,\ep}) + 
      \|\hat\gamma(u_{0,\ep})\|_{L^1(\Omega)}
      \right)<+\infty\,,\\
      &\exists\,[a_0,b_0]\subset
      \operatorname{Int}D(\gamma):\quad
      a_0\leq(u_{0,\ep})_\Omega\leq b_0 \quad
      \forall\,\ep>0\,,\\
      &u_{0,\ep}\to u_0
      \quad\text{in } H \quad\text{as }
      \ep\searrow0\,.
  \end{align*}
  Then, if $(u_\ep,\mu_\ep, \xi_\ep)$
  is the solution
  to the nonlocal problem
  given by Theorem~\ref{thm1},
  there exists a solution
  $(u,\mu,\xi)$ to the local problem 
  local problem such that 
  \begin{align}
      &\notag u_\ep\to u 
      \quad\text{in } C^0([0,T]; H)\,,\\
      &\label{eq1-f}u_\ep\wstarto u
      \quad\text{in } L^\infty(0,T; H)\cap H^1(0,T;V^*)\,,\\
      &\label{eq2-f}\mu_\ep\rightharpoonup\mu 
      \quad\text{in } L^2(0,T; V)\,,\\
      &\label{eq3-f}\xi_\ep\rightharpoonup\xi
      \quad\text{in } L^2(0,T; H)\,.
  \end{align}
\end{theorem}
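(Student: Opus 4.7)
The plan is to derive uniform-in-$\ep$ a priori estimates on $(u_\ep,\mu_\ep,\xi_\ep)$, extract convergent subsequences via a nonlocal Aubin--Lions-type compactness argument, and pass to the limit in each term of the weak formulation \eqref{eq:grad-flow}, exploiting the Bourgain--Brezis--Mironescu/Ponce asymptotic theorems recalled in Section~\ref{sec:prelim}. First I would test \eqref{eq:grad-flow} by $\mu_\ep$ and combine with the nonlocal gradient-flow identity
$$
\frac{d}{dt}\Bigl[E_\ep(u_\ep) + \int_\Omega(\hat\gamma+\hat\Pi)(u_\ep)\,\xd x\Bigr] = \langle\partial_t u_\ep,\mu_\ep\rangle_V,
$$
handling the convection contribution by Young's inequality and Assumption~\textbf{H4} so as to absorb half of $\|\nabla\mu_\ep\|_H^2$ on the left-hand side, and closing via Gronwall. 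This yields uniform bounds on $E_\ep(u_\ep)$ and $\|\hat\gamma(u_\ep)\|_{L^1(\Omega)}$ in $L^\infty(0,T)$ as well as on $\|\nabla\mu_\ep\|_{L^2(Q)}$. The spatial mean $(\mu_\ep)_\Omega$ is controlled by testing the pointwise identity for $\mu_\ep$ against $u_\ep-(u_{0,\ep})_\Omega$ and exploiting the $\operatorname{Int}D(\gamma)$ assumption on the initial mean, exactly as in the proof of Theorem~\ref{thm1}. Returning to \eqref{eq:grad-flow} one gets $\partial_t u_\ep\in L^2(0,T;V^*)$ uniformly, and then $\xi_\ep=\mu_\ep-(J_\ep*1)u_\ep+J_\ep*u_\ep-\Pi(u_\ep)\in L^2(0,T;V)$ uniformly.

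Second, for compactness, the uniform bound on $E_\ep(u_\ep(t))$, combined with the Ponce-type nonlocal Rellich theorem, gives precompactness of $\{u_\ep(t)\}_\ep$ in $H$ uniformly in $t$, while the $H^1(0,T;V^*)$ bound yields $V^*$-equicontinuity in time. A Simon-type lemma then produces $u_\ep\to u$ in $C^0([0,T];H)$ along a subsequence; the Ponce liminf inequality ensures $u\in L^\infty(0,T;V)$, and the weak and weak-$\ast$ convergences in \eqref{eq1-f}--\eqref{eq3-f} follow from the uniform estimates and reflexivity.

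Third, I would pass to the limit in \eqref{eq:grad-flow}. The convection term converges by strong-weak multiplication, since $u_\ep\to u$ strongly in $L^2(Q)$ and $\textbf{v}\cdot\nabla\varphi\in L^2(Q)$. For the nonlocal operator, the symmetry of $J_\ep$ rewrites
$$
\int_\Omega\bigl[(J_\ep*1)u_\ep-J_\ep*u_\ep\bigr]\varphi\,\xd x = \frac12\int_\Omega\int_\Omega J_\ep(x-y)\bigl(u_\ep(x)-u_\ep(y)\bigr)\bigl(\varphi(x)-\varphi(y)\bigr)\,\xd x\,\xd y,
$$
and the Bourgain--Brezis--Mironescu formula, together with strong $L^2$-convergence of $u_\ep$, identifies the limit as $\int_\Omega\nabla u\cdot\nabla\varphi\,\xd x$. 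The Lipschitz term $\Pi(u_\ep)$ converges to $\Pi(u)$ strongly in $L^2(Q)$. The inclusion $\xi\in\gamma(u)$ is recovered via the strong-weak closure of the maximal monotone operator induced by $\gamma$ on $L^2(Q)$: strong $L^2$-convergence of $u_\ep$ and weak $L^2$-convergence of $\xi_\ep$ yield $\int_Q\xi_\ep u_\ep\to\int_Q\xi u$, which suffices. The initial condition $u(0)=u_0$ is preserved by continuity of $u_\ep$ in $H$ and by $u_{0,\ep}\to u_0$ in $H$.

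The main obstacle is the nonlocal compactness step: a uniform bound on $E_\ep(u_\ep)$ does not directly imply any $H^s$-Sobolev bound with $s>0$ independent of $\ep$, so Rellich-type compactness in $H$ must be extracted directly from the Bourgain--Brezis--Mironescu/Ponce machinery, relying on the precise normalization in Assumption~\textbf{H2}. A secondary subtlety lies in evaluating the symmetrized bilinear form when $\varphi$ is only in $V$: boundary effects of the convolution $J_\ep*\cdot$ near $\partial\Omega$ must be handled, typically by density of smooth test functions combined with the $W^{1,1}$-regularity of $J_\ep$ ensured by Assumption~\textbf{H1}.
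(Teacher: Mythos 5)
Your overall architecture (uniform estimates, compactness, passage to the limit) matches the paper's, and several steps are right: the energy estimate via testing by $\mu_\ep$, the control of $(\mu_\ep)_\Omega$ through the $\operatorname{Int}D(\gamma)$ assumption, the upgrade from $C^0([0,T];V^*)$ to $C^0([0,T];H)$ convergence via an Ehrling-type interpolation of the nonlocal energies (the paper's Lemma~\ref{lem:comp}), and the strong--weak closure argument for $\gamma$. However, there are two genuine gaps. First, your claim that $\xi_\ep=\mu_\ep-(J_\ep*1)u_\ep+J_\ep*u_\ep-\Pi(u_\ep)$ is bounded in $L^2(0,T;V)$ \emph{uniformly in $\ep$} by comparison is false: $\|J_\ep\|_{L^1(\mathbb{R}^d)}\sim\ep^{-2}$ and $\|\nabla J_\ep\|_{L^1(\mathbb{R}^d)}\sim\ep^{-3}$, so the convolution terms are not uniformly bounded in $H$, let alone $V$, and the comparison yields nothing. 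The paper obtains \eqref{eq3-f} the other way around: one tests the identity for $\mu_\ep$ by $\xi_\ep$ itself and observes that the cross term
\begin{equation*}
\frac12\int_{\Omega\times\Omega}J_\ep(x-y)\bigl(\xi_\ep(x)-\xi_\ep(y)\bigr)\bigl(u_\ep(x)-u_\ep(y)\bigr)\,\xd x\,\xd y
\end{equation*}
is nonnegative by the monotonicity of $\gamma$ (since $\xi_\ep\in\gamma(u_\ep)$), which gives $\|\xi_\ep\|_{L^2(0,T;H)}\leq M$; only \emph{then} does comparison give a uniform $L^2(0,T;H)$ bound on $(J_\ep*1)u_\ep-J_\ep*u_\ep$. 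Without this step you have neither \eqref{eq3-f} nor a weak limit $\eta$ for the nonlocal operator.

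Second, the identification of the limit of the nonlocal operator is not achieved by ``the BBM formula together with strong $L^2$-convergence of $u_\ep$.'' The BBM/Ponce limit of the symmetrized bilinear form holds for \emph{fixed} arguments in $V$; here one argument is $u_\ep$, which is controlled only through $E_\ep(u_\ep)$ and is not uniformly bounded in $V$, so Cauchy--Schwarz gives boundedness of $\langle B_\ep(u_\ep),\varphi\rangle$ but not its limit. Your fallback via density of smooth test functions also fails: as the paper stresses before Proposition~\ref{lem:ident_B}, even for $\zeta\in W$ the family $(B_\ep(\zeta))_\ep$ need not be bounded in $H$. The paper resolves this either by the resolvent-type approximation $\ep\tilde\zeta_\ep+B_\ep(\tilde\zeta_\ep)=-\Delta\zeta$ of Proposition~\ref{lem:ident_B}, or (as actually used in the proof of Theorem~\ref{thm2}) by exploiting convexity: $\int_0^T E_\ep(u_\ep)+\int_Q B_\ep(u_\ep)(\varphi-u_\ep)\leq\int_0^T E_\ep(\varphi)$ for $\varphi\in L^2(0,T;V)$, then passing to the liminf using the weak $L^2(Q)$ convergence $B_\ep(u_\ep)\rightharpoonup\eta$ (which again requires the $\xi_\ep$ bound above), Lemma~\ref{lem:conv_E}, and Fatou, to conclude $\eta\in\partial E(u)$ and hence $\eta=-\Delta u$ with $u\in L^2(0,T;W)$ by elliptic regularity. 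You need one of these two mechanisms; as written, your limit passage for the principal term does not go through.
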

\section{Preliminaries}
\label{sec:prelim}
We collect in this section some preliminary
results that will be used in the sequel. We first show that for our choice of $\alpha$ the kernel $J_\ep$ is of class $W^{1,1}$.

\begin{lemma}
  \label{lem:rho}
  For every $\ep>0$ it holds that $J_\ep\in W^{1,1}(\mathbb R^d)$.
\end{lemma}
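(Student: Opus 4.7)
The plan is to exploit radial symmetry and reduce everything to one-dimensional integrals of the profile $\rho$. I would write $J_\ep(z)=f_\ep(|z|)$ with $f_\ep(r):=\ep^{-(2-\alpha)}\rho_\ep(r)\,r^{-\alpha}$, so that $f_\ep\in C^1(0,\infty)$ and the classical gradient satisfies $\nabla J_\ep(z)=f_\ep'(|z|)\,z/|z|$ for $z\neq 0$. Passing to spherical coordinates and substituting $s=r/\ep$ gives
\[
\|J_\ep\|_{L^1(\mathbb{R}^d)}=\frac{\mathcal{H}^{d-1}(S^{d-1})}{\ep^{2}}\int_{0}^{\infty}\rho(s)\,s^{d-1-\alpha}\,\xd s.
\]
To bound the right-hand side I would split $[0,\infty)=[0,1]\cup[1,\infty)$: on $[0,1]$, $\rho$ is bounded by continuity (recall $\rho\in C^1([0,\infty))$) and $s^{d-1-\alpha}$ is integrable because $d-1-\alpha>0$ by \textbf{H1}; on $[1,\infty)$ one has $s^{d-1-\alpha}\leq s^{d+1-\alpha}$, so the renormalization in \textbf{H2} closes the estimate.

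For the gradient, the product rule gives $f_\ep'(r)=\ep^{-(2-\alpha)}\bigl[\rho_\ep'(r)r^{-\alpha}-\alpha\rho_\ep(r)r^{-\alpha-1}\bigr]$, and the same spherical-coordinate/rescaling computation yields
\[
\|\nabla J_\ep\|_{L^1(\mathbb{R}^d\setminus\{0\})}\leq \frac{\mathcal{H}^{d-1}(S^{d-1})}{\ep^{2}}\left(\int_{0}^{\infty}|\rho'(s)|\,s^{d-1-\alpha}\,\xd s+\alpha\int_{0}^{\infty}\rho(s)\,s^{d-2-\alpha}\,\xd s\right).
\]
The first integral is finite by \textbf{H2}. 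The second one is treated exactly as the previous $L^1$-bound: $\rho$ bounded on $[0,1]$ combined with $d-2-\alpha>-1$ (again from $\alpha<d-1$ in \textbf{H1}) gives integrability near $0$, while for $s\geq 1$ the weight is dominated by $s^{d+1-\alpha}$ and the renormalization applies once more.

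Finally, I would check that the classical derivative computed on $\mathbb{R}^d\setminus\{0\}$ coincides with the distributional one, so that the above $L^1$-bounds actually imply $J_\ep\in W^{1,1}(\mathbb{R}^d)$. Since $J_\ep\in L^1_{\mathrm{loc}}(\mathbb{R}^d)$ with the only singularity at the origin of order $|z|^{-\alpha}$ and $\alpha<d-1<d$, no Dirac contribution can appear at $\{0\}$: integrating by parts against a smooth test function over $\mathbb{R}^d\setminus B_\delta(0)$ produces a boundary term of order $\delta^{d-1-\alpha}$, which vanishes as $\delta\searrow 0$, and dominated convergence handles the remaining integrals. \textbf{The main delicate point} is precisely this subcriticality check: the strict inequality $\alpha<d-1$ in \textbf{H1} is used both for the integrability of $s^{d-2-\alpha}$ near the origin and for the vanishing of the boundary terms, making it the pivotal assumption that enables the $W^{1,1}$-regularity.
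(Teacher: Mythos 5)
Your argument is correct and follows essentially the same route as the paper: reduce to one-dimensional integrals by radial symmetry and the substitution $s=r/\ep$, split the gradient into the $\rho'$-term and the $\alpha\rho(|z|/\ep)|z|^{-\alpha-1}$-term, and use \textbf{H1}--\textbf{H2} for finiteness (your explicit near-zero/at-infinity splitting and the verification that the pointwise gradient is the distributional one are details the paper leaves implicit, and the latter is a welcome addition). The only slip is cosmetic: the rescaling in the gradient estimate produces a prefactor $\ep^{-3}$ rather than $\ep^{-2}$ (as in the paper's final display), which is immaterial since $\ep>0$ is fixed.
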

\begin{proof}
  An elementary computation yields 
  \begin{align*}
      \int_{\mathbb R^d}J_\ep(z)\,\xd z&=
      \int_{\mathbb R^d}
      \frac1{\ep^{2-\alpha}}
      \frac1{\ep^d}
      \rho(|z|/\ep)\frac1{|z|^\alpha}\,\xd z
      =\frac1{\ep^{2}}
      \int_{\mathbb R^d}\rho(|w|)
      |w|^{-\alpha}\,\xd w\\
      &=
      \frac1{\ep^2}|S^{d-1}|\int_0^{+\infty}
      \rho(r)r^{d-1-\alpha}\,\xd r\,,
  \end{align*}
  where the right-hand side is finite 
  thanks to {\bf H1--H2}. 
  Hence, $J_\ep\in L^1(\mathbb R^d)$.
  Furthermore, note that for every 
  $i\in\{1,\ldots,d\}$,
  \[
  \partial_{z_i}J_\ep(z)=
  \frac1{\ep^{2-\alpha}}
  \frac{\rho_\ep'(|z|)\frac{z_i}{|z|}|z|^\alpha
  -\rho_\ep(|z|)\alpha|z|^{\alpha-1}\frac{z_i}{|z|}}
  {|z|^{2\alpha}}=
  \frac1{\ep^{d+2-\alpha}}
  \frac{\frac1{\ep}\rho'(|z|/\ep)z_i|z|
  -\alpha\rho(|z|/\ep)z_i}
  {|z|^{\alpha+2}}\,,
  \]
  yielding
  \begin{equation}
  \label{eq:grad}
  |\nabla J_\ep(z)|\leq\frac1{\ep^{d+2-\alpha}}
  \left(\frac1\ep\frac{|\rho'(|z|/\ep)|}{|z|^\alpha}
  +\alpha\frac{\rho(|z|/\ep)}{|z|^{\alpha+1}}\right)
  \,.
  \end{equation}
  The two terms on the right-hand side of \eqref{eq:grad}
  satisfy
  \[
  \frac1\ep\int_{\mathbb{R}^d}
  \frac{|\rho'(|z|/\ep)|}{|z|^\alpha}\,\xd z=
  \frac1{\ep^{\alpha+1-d}}
  \int_{\mathbb{R}^d}
  \frac{|\rho'(|w|)|}{|w|^\alpha}\,\xd w=
  \frac1{\ep^{\alpha+1-d}}|S^{d-1}|
  \int_0^{+\infty}|\rho'(r)|r^{d-1-\alpha}\,\xd r
  \]
  and 
  \[
  \alpha\int_{\mathbb{R}^d}
  \frac{\rho(|z|/\ep)}{|z|^{\alpha+1}}\,\xd z=
  \frac\alpha{\ep^{\alpha+1-d}}
  \int_{\mathbb{R}^d}
  \frac{\rho(|w|)}{|w|^{\alpha+1}}\,\xd w=
  \frac\alpha{\ep^{\alpha+1-d}}|S^{d-1}|
  \int_0^{+\infty}\rho(r)r^{d-2-\alpha}\,\xd r\,,
  \]
  so that by comparison we have
  \[
  \int_{\mathbb R^d}|\nabla J_\ep(z)|\,\xd z
  \leq\frac{|S^{d-1}|}{\ep^3}
  \int_0^{+\infty}\left(
  r|\rho'(r)| + \alpha\rho(r)
  \right)r^{d-2-\alpha}\,\xd r\,.
  \]
  The right-hand side is finite 
  again by {\bf H1--H2},
  hence $J_\ep\in W^{1,1}(\mathbb R^d)$.
\end{proof}

As we have anticipated, 
the fact that $J_\ep\in W^{1,1}(\mathbb{R}^d)$
implies that the nonlocal energy 
$E_\ep$ is well defined on the whole space $H$, i.e.~$E_\ep:H\to[0,+\infty)$.
Indeed, it easily follows
by the properties of the convolution
that 
\[
  E_\ep(\varphi)=
  \frac12\left((J_\ep*1)\varphi
  -J_\ep*\varphi,\varphi\right)_H\leq
  \|J_\ep\|_{L^1(\mathbb{R}^d)}
  \|\varphi\|_H^2
  \qquad\forall\,\varphi\in H\,.
\]

We proceed by studying the regularity of $E_\ep$, and by characterizing its differential.
\begin{lemma}
  \label{lem:diff}
  For all $\ep>0$, $E_\ep:H\to[0,+\infty)$
  is of class $C^1$ and $DE_\ep:H\to H$ is given by
  \begin{align*}
  \langle DE_\ep(\varphi),
  \zeta\rangle&=
  \int_\Omega \left((J_\ep*1)\varphi-
  J_\ep*\varphi\right)(x)\zeta(x)\,\xd x\\
  &=\frac12\int_{\Omega\times\Omega}
  J_\ep(x-y)(\varphi(x)-\varphi(y))(\zeta(x)-\zeta(y))
  \,\xd x\,\xd y
  \quad\varphi,\zeta\in H\,.
  \end{align*}
\end{lemma}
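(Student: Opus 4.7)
The strategy is to exploit the fact that $E_\ep$ is a continuous quadratic form on the Hilbert space $H$: once one writes it as $E_\ep(\varphi)=\tfrac12(A_\ep\varphi,\varphi)_H$ for a bounded self-adjoint linear operator $A_\ep:H\to H$, both $C^1$ regularity and the formula for $DE_\ep$ follow by a direct computation of the increment. The key input is Lemma~\ref{lem:rho}, which guarantees $J_\ep\in L^1(\mathbb{R}^d)$, hence $J_\ep*1\in L^\infty(\Omega)$ and the convolution operator $\varphi\mapsto J_\ep*\varphi$ is bounded on $H$ by Young's inequality.

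First I would expand the square in the definition of $E_\ep$ and use the symmetry $J_\ep(z)=J_\ep(-z)$ to rewrite
\[
E_\ep(\varphi)=\tfrac12\bigl((J_\ep*1)\varphi-J_\ep*\varphi,\varphi\bigr)_H,
\]
thereby exhibiting the operator $A_\ep\varphi:=(J_\ep*1)\varphi-J_\ep*\varphi$, which is bounded and self-adjoint on $H$ (self-adjointness again follows from the symmetry of $J_\ep$). Next I would compute directly, for $\varphi,\zeta\in H$,
\[
E_\ep(\varphi+\zeta)-E_\ep(\varphi)=\bigl(A_\ep\varphi,\zeta\bigr)_H+\tfrac12\bigl(A_\ep\zeta,\zeta\bigr)_H,
\]
where the remainder is controlled by $\tfrac12\|A_\ep\|_{\mathcal{L}(H)}\|\zeta\|_H^2=o(\|\zeta\|_H)$. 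This identifies $DE_\ep(\varphi)=A_\ep\varphi$ and, since $A_\ep$ is a bounded linear map $H\to H$, the differential $\varphi\mapsto DE_\ep(\varphi)$ is itself continuous (indeed Lipschitz), so $E_\ep\in C^1(H)$.

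It then remains only to verify that the two displayed expressions for $\langle DE_\ep(\varphi),\zeta\rangle$ coincide. Starting from the double-integral form, I would expand $(\varphi(x)-\varphi(y))(\zeta(x)-\zeta(y))$ into its four summands and use Fubini together with the symmetry $J_\ep(x-y)=J_\ep(y-x)$ to combine the two diagonal terms into $\int_\Omega(J_\ep*1)(x)\varphi(x)\zeta(x)\,\xd x$ and the two cross terms into $\int_\Omega(J_\ep*\varphi)(x)\zeta(x)\,\xd x$, giving the first representation. Alternatively, one can start from the first form and run the same manipulations in reverse.

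There is no genuine obstacle here: the argument is purely algebraic combined with the boundedness ensured by $J_\ep\in L^1(\mathbb{R}^d)$. The only point that needs a touch of care is the justification of Fubini when reshuffling terms, but this is immediate because each summand is integrable on $\Omega\times\Omega$: the integrands are products of $J_\ep(x-y)$ with $L^2\times L^2$ functions, and Young's inequality gives $\int\int J_\ep(x-y)|\varphi(x)||\zeta(y)|\,\xd x\,\xd y\le\|J_\ep\|_{L^1(\mathbb{R}^d)}\|\varphi\|_H\|\zeta\|_H$.
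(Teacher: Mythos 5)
Your proof is correct and follows essentially the same route the paper indicates (the paper only sketches it, deferring to the definition of G\^ateaux differentiability, the linearity of the convolution, and the references \cite{DRST,DST}): you exhibit $E_\ep$ as the quadratic form of the bounded self-adjoint operator $A_\ep\varphi=(J_\ep*1)\varphi-J_\ep*\varphi$ and read off the differential from the increment. If anything, your write-up is more complete, since the explicit remainder bound yields Fr\'echet (not just G\^ateaux) differentiability and the Fubini justification is spelled out.
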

\begin{proof}
  The proof is a direct consequence of the 
  definition of G\^ateaux differentiability 
  and of the linearity of the convolution operator:
  see \cite{DRST, DST} for details.
\end{proof}
The next lemma provides a characterization of the asymptotic behavior of the energies $E_\ep$.
\begin{lemma}
  \label{lem:conv_E}
  For every $\varphi,\zeta\in V$ it holds that 
  \begin{align*}
  &\lim_{\ep\to0}E_\ep(\varphi)=E(\varphi)\,,\\
  &\lim_{\ep\to0}
  \int_\Omega
  \left((J_\ep*1)\varphi-
  J_\ep*\varphi\right)(x)\zeta(x)\,\xd x=
  \int_\Omega\nabla\varphi(x)
  \cdot\nabla\zeta(x)\,\xd x\,.
  \end{align*}
  Moreover, for every sequence
  $(\varphi_\ep)_{\ep>0}\subset H$ and
  $\varphi\in H$ it holds that
  \begin{align*}
      \sup_{\ep>0}E_\ep(\varphi_\ep)<+\infty
      \qquad&\Rightarrow\qquad
      (\varphi_\ep)_\ep \text{ is 
      relatively compact in } H\,,\\
      \varphi_\ep\to\varphi \quad\text{in } H
      \qquad&\Rightarrow\qquad
      E(\varphi)\leq
     \liminf_{\ep\to0}E_\ep(\varphi_\ep)\,.
  \end{align*}
\end{lemma}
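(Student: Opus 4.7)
The strategy is to recast the problem in the classical Bourgain--Brezis--Mironescu/Ponce framework by absorbing the singular weight $|z|^{-\alpha}$ into the mollifier. Observe that
\[
J_\ep(z)=\frac{\tilde\rho_\ep(z)}{|z|^2},\qquad
\tilde\rho_\ep(z):=\frac{1}{\ep^d}\tilde\rho(z/\ep),\qquad
\tilde\rho(w):=|w|^{2-\alpha}\rho(|w|),
\]
so that the prefactor $\ep^{-(2-\alpha)}$ in $J_\ep$ is exactly what is required to make $\tilde\rho_\ep$ a standard rescaling of the fixed profile $\tilde\rho$. Under \textbf{H1--H2} the function $\tilde\rho$ is non-negative and radial, and a spherical-coordinates computation yields $\int_{\R^d}\tilde\rho\,\xd w = |S^{d-1}|\int_0^{+\infty}\rho(r)r^{d+1-\alpha}\,\xd r = \tfrac{2|S^{d-1}|}{C_d}$ by the renormalization in \textbf{H2}. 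Hence $\int\tilde\rho_\ep$ is constant in $\ep$ and $\int_{|z|>\delta}\tilde\rho_\ep\to 0$ for every $\delta>0$, i.e.~$(\tilde\rho_\ep)_\ep$ is a family of radial mollifiers concentrating at the origin in the sense of Ponce. The main identity
\[
E_\ep(\varphi)=\frac14\int_{\Omega\times\Omega}\tilde\rho_\ep(x-y)\frac{|\varphi(x)-\varphi(y)|^2}{|x-y|^2}\,\xd x\,\xd y
\]
then brings $E_\ep$ into the form treated in \cite{BBM, ponce04, ponce}.

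For the first limit, I would invoke Ponce's pointwise convergence theorem \cite{ponce}: for every $\varphi\in V$,
\[
\int_{\Omega\times\Omega}\tilde\rho_\ep(x-y)\frac{|\varphi(x)-\varphi(y)|^2}{|x-y|^2}\,\xd x\,\xd y
\longrightarrow\Bigl(\int_{\R^d}\tilde\rho\Bigr)\cdot\frac{1}{|S^{d-1}|}\int_{S^{d-1}}|\sigma\cdot e_1|^2\,\xd\mathcal{H}^{d-1}(\sigma)\cdot\int_\Omega|\nabla\varphi|^2\,\xd x.
\]
The two constants multiply to $\tfrac{2|S^{d-1}|}{C_d}\cdot\tfrac{C_d}{|S^{d-1}|}=2$, so the prefactor $\tfrac14$ returns precisely $E(\varphi)=\tfrac12\int|\nabla\varphi|^2$. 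The second limit then follows by polarization from Lemma~\ref{lem:diff}, which gives
\[
\int_\Omega\bigl((J_\ep*1)\varphi-J_\ep*\varphi\bigr)\zeta\,\xd x
=E_\ep(\varphi+\zeta)-E_\ep(\varphi)-E_\ep(\zeta);
\]
since each term on the right is treated by the first limit (with $\varphi+\zeta,\varphi,\zeta\in V$), passing to the limit yields $E(\varphi+\zeta)-E(\varphi)-E(\zeta)=\int_\Omega\nabla\varphi\cdot\nabla\zeta\,\xd x$.

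The compactness and $\liminf$ statements follow from the corresponding results of Ponce \cite{ponce04}, applied again to the equivalent BBM-form of $E_\ep$ with the mollifiers $\tilde\rho_\ep$: any family $(\varphi_\ep)$ with uniformly bounded BBM-energies is relatively compact in $L^2(\Omega)$ (using the $H$-boundedness which is available in the intended application of the lemma within the a-priori estimates of Theorems~\ref{thm1}--\ref{thm2}), and any limit point belongs to $V$ with $E(\varphi)\leq\liminf_\ep E_\ep(\varphi_\ep)$. The only nontrivial point in the whole argument is the verification of the multiplicative constant in Ponce's theorem against the normalization in \textbf{H2}; once this bookkeeping is done, the four statements are essentially a direct transposition of known BBM/Ponce results to the rescaled mollifier $\tilde\rho_\ep$.
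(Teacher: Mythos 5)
Your proof is correct and follows essentially the same route as the paper's: both absorb the weight $|z|^{-\alpha}$ into a rescaled mollifier $\tilde\rho_\ep$, check the Ponce normalization and concentration conditions from \textbf{H1}--\textbf{H2}, and then invoke the results of \cite{BBM, ponce04, ponce}. Your explicit polarization argument for the bilinear limit and your remark that the compactness claim tacitly relies on an $L^2$-bound (available in the applications via the Poincar\'e inequality and control of the mean) are sound refinements of the same strategy.
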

\begin{proof}
  Note that
  \[
  E_\ep(\varphi)=
  \frac14\int_{\Omega\times\Omega}
  \rho_\ep(|x-y|)
  \frac{|\varphi(x)-\varphi(y)|^2}
  {\ep^{2-\alpha}|x-y|^\alpha}
  \,\xd x\,\xd y=
  \frac14\int_{\Omega\times\Omega}
  \tilde\rho_\ep(|x-y|)
  \frac{|\varphi(x)-\varphi(y)|^2}
  {|x-y|^2}
  \,\xd x\,\xd y\,,
  \]
  where
  \[
  \tilde\rho_\ep(r):=\rho_\ep(r)
  \frac{r^{2-\alpha}}{\ep^{2-\alpha}}\,, \quad
  r\geq0\,.
  \]
  The three statements are then a consequence of 
  the results in \cite{ponce04, ponce}
  and \cite{DST} provided
  that the new sequence $(\tilde\rho_\ep)_{\ep}$
  of mollifiers satisfies the following 
  properties:
  \begin{align*}
      &\int_0^{+\infty}
      \tilde\rho_\ep(r)r^{d-1}\,\xd r
      =\frac2{C_d} \quad\forall\,\ep>0\,,\\
      &\lim_{\ep\to0}\int_{\delta}^{+\infty}
      \tilde\rho_\ep(r)r^{d-1}\,\xd r=0\quad
      \forall\,\delta>0\,.
  \end{align*}
  The first condition follows from {\bf H2} and
the computation
\[
  \int_0^{+\infty}
  \tilde\rho_\ep(r)r^{d-1}\,\xd r =
  \frac1{\ep^{d+2-\alpha}}\int_0^{+\infty}
  \rho(r/\ep)r^{d+1-\alpha}\,\xd r=
  \int_0^{+\infty}\rho(s)s^{d+1-\alpha}\,\xd s=\frac2{C_d}\,,
\]
while the second condition follows from
the equality
\[
  \int_\delta^{+\infty}
  \tilde\rho_\ep(r)r^{d-1}\,\xd r =
  \int_{\delta/\ep}^{+\infty}\rho(s)s^{d+1-\alpha}\,\xd s
\]
and the dominated convergence theorem.
\end{proof}

Lemma~\ref{lem:conv_E} above 
can be used to study the 
asymptotic behaviour of $DE_\ep$
as $\ep\to0$. To this aim, we introduce 
the operators
\[
  B_\ep:H\to H\,, \qquad
  B_\ep(\varphi):=DE_\ep(\varphi)
  =(J_\ep*1)\varphi - J_\ep*\varphi\,,
  \quad\varphi\in H\,,
\]
and
\[
  B:V\to V^*\,,\qquad
  \langle B(\varphi),\zeta\rangle:=
  \int_\Omega\nabla\varphi(x)\cdot
  \nabla\zeta(x)\,\xd x\,,
  \quad\varphi,\zeta\in V\,.
\]
Lemma~\ref{lem:conv_E} 
implies that 
$B_\ep(\varphi)\rightharpoonup 
B(\varphi)$ in $V^*$
for every 
$\varphi\in V$ as $\ep\searrow0$.
As an immediate consequence,
for every sequence
$(\varphi_\ep)_\ep\subset V$
such that $\varphi_\ep\to\varphi$
in $V$, there 
holds $B_\ep(\varphi_\ep)
\rightharpoonup B(\varphi)$ in $V^*$.
While this identification result 
is surely enough for standard 
purposes, it is not entirely 
satisfactory. Indeed, 
in several applications the 
strong convergence
$\varphi_\ep\to\varphi$ in $V$ is 
far beyond reach. The main reason 
is that the regularity 
$\varphi_\ep\in V$ can be 
generally obtained 
at $\ep>0$ fixed, but not uniformly 
in $\ep$ (as the gradient 
of $J_\ep$ blows up as $\ep\to0$,
as shown in the proof of
Lemma~\ref{lem:rho}).\\

The following proposition provides a
far more general sufficient condition
to identify the limit of the 
operators $B_\ep$, without requiring any control in the space $V$.
The main idea is to observe that for any suitable
test function $\zeta$, by symmetry of $B_\ep$, we can write
$\langle B_\ep(\varphi_\ep),
\zeta\rangle=
\langle B_\ep(\zeta),
\varphi_\ep\rangle$. Hence, if only
$\varphi_\ep\to\varphi$ in $H$ and
$\zeta\in W$ is such that 
$B_\ep(\zeta)\rightharpoonup
-\Delta\zeta$ in $H$, 
then we can still conclude.
However, it is absolutely not trivial 
to show that $(B_\ep(\zeta))_\ep$ is
bounded in $H$ for a specific class of 
functions $\zeta$: in particular, 
the natural choice $\zeta\in W$
does not seem to work.
The main novelty of the following proposition 
consists in showing that, nevertheless,
any $\zeta\in W$ can be suitably
approximated by a sequence 
$(\zeta_\ep)_\ep$ for which 
$\zeta_\ep\to\zeta$ in $H$ and
$B_\ep(\zeta_\ep)
\to-\Delta\zeta$ in $H$,
and that this is enough to conclude.

\begin{proposition}
  \label{lem:ident_B}
  Let $(\varphi_\ep)_\ep\subset H$ and
  $\varphi\in H$ be such that 
  $\varphi_\ep\to\varphi$ in $H$
  and $(E_\ep(\varphi_\ep))_\ep$
  is uniformly bounded. Then,
  $\varphi\in V$ and 
  $B_\ep(\varphi_\ep)\rightharpoonup
  B(\varphi)$ in $V^*$.
\end{proposition}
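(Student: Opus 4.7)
First I would observe that the strong convergence $\varphi_\ep \to \varphi$ in $H$ combined with $\sup_\ep E_\ep(\varphi_\ep) < +\infty$ yields $\varphi \in V$ directly from the $\liminf$-inequality in Lemma~\ref{lem:conv_E}. The next step is a uniform $V^*$-bound on $B_\ep(\varphi_\ep)$. Using the symmetric bilinear representation of $\langle B_\ep(\varphi),\zeta\rangle$ in Lemma~\ref{lem:diff}, Cauchy--Schwarz on the double integral gives
\[
|\langle B_\ep(\varphi_\ep),\psi\rangle| \leq 2\sqrt{E_\ep(\varphi_\ep)}\,\sqrt{E_\ep(\psi)} \qquad\forall\,\psi\in V.
\]
A BBM/Ponce-type uniform estimate $E_\ep(\psi) \leq C\|\psi\|_V^2$ (which follows from the scaling of the rescaled mollifier $\tilde\rho_\ep$ introduced in the proof of Lemma~\ref{lem:conv_E}) then delivers $\|B_\ep(\varphi_\ep)\|_{V^*} \leq C$. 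Hence, along a (non-relabelled) subsequence, $B_\ep(\varphi_\ep) \rightharpoonup \eta$ in $V^*$, and the problem reduces to identifying $\eta = B(\varphi)$.

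Since $W$ is dense in $V$, it suffices to verify this pairing for $\zeta \in W$. Exploiting the self-adjointness of $B_\ep$ on $H$ and following the approximation strategy flagged in the text, I would write, for a suitable sequence $(\zeta_\ep)_\ep\subset H$,
\[
(B_\ep(\varphi_\ep),\zeta)_H = (B_\ep(\varphi_\ep),\zeta - \zeta_\ep)_H + (B_\ep(\zeta_\ep),\varphi_\ep)_H,
\]
where $\zeta_\ep$ must simultaneously satisfy $\zeta_\ep \to \zeta$ in $H$, $E_\ep(\zeta-\zeta_\ep) \to 0$, and $B_\ep(\zeta_\ep) \to -\Delta\zeta$ in $H$. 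Granted such a sequence, the first summand is controlled by $2\sqrt{E_\ep(\varphi_\ep)}\sqrt{E_\ep(\zeta-\zeta_\ep)} \to 0$, while the second converges to $(-\Delta\zeta,\varphi)_H = (\nabla\zeta,\nabla\varphi)_H = \langle B(\varphi),\zeta\rangle$ by strong pairing in $H$ and an integration by parts legitimized by $\partial_{\bf n}\zeta = 0$. The left-hand side converges to $\langle\eta,\zeta\rangle$ by $V^*$-weak convergence against $\zeta\in V$, which yields the identification on $W$ and, by density together with the uniform $V^*$-bound, on all of $V$; uniqueness of the limit then upgrades the subsequential extraction to convergence of the whole family.

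The principal obstacle is the construction of $(\zeta_\ep)_\ep$. A formal Taylor expansion around an interior point $x$, combined with the normalization $\int_0^{+\infty}\rho(s)s^{d+1-\alpha}\,\xd s = 2/C_d$ prescribed by \textbf{H2}, indeed delivers $B_\ep(\zeta)(x) \to -\Delta\zeta(x)$ pointwise away from $\partial\Omega$; however, for $x$ within distance $O(\ep)$ of the boundary the convolution ball is truncated against $\Omega^c$ and the remainder does not vanish in $L^2$ — precisely the reason why the naive choice $\zeta_\ep = \zeta$ fails. The natural remedy is to set $\zeta_\ep := \zeta + h_\ep$, where $h_\ep$ is a boundary-layer correction supported in an $O(\ep)$-tubular neighborhood of $\partial\Omega$, tailored to compensate the boundary defect in $B_\ep(\zeta) + \Delta\zeta$ while still satisfying $\|h_\ep\|_H \to 0$ and $E_\ep(h_\ep) \to 0$. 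Producing such $h_\ep$, and doing so in a way that makes essential use of the $W^{1,1}$ regularity of $J_\ep$ proved in Lemma~\ref{lem:rho}, is the technical core of the proof.
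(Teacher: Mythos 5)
Your reduction of the problem is exactly the one the paper uses: the uniform $V^*$-bound via $|\langle B_\ep(\varphi_\ep),\psi\rangle|\leq 2\sqrt{E_\ep(\varphi_\ep)}\sqrt{E_\ep(\psi)}\leq C\|\psi\|_V$, the extraction of a weak limit $\eta$, the regularity $\varphi\in V$ from the liminf inequality, and the identification of $\eta$ by pairing with $\zeta\in W$ after transferring $B_\ep$ onto an approximating sequence $(\zeta_\ep)_\ep$ satisfying $\zeta_\ep\to\zeta$ in $H$, $E_\ep(\zeta_\ep-\zeta)\to0$, and $B_\ep(\zeta_\ep)\to-\Delta\zeta$ in $H$. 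Up to that point the argument is correct and complete.

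The gap is that you never construct $(\zeta_\ep)_\ep$, and the construction is the actual content of the proposition. The boundary-layer strategy you sketch is moreover unlikely to close: for $x$ within distance $O(\ep)$ of $\partial\Omega$ the truncated convolution produces a first-order defect $-\nabla\zeta(x)\cdot\int_\Omega J_\ep(x-y)(y-x)\,\xd y$ of pointwise size $O(\ep^{-1})$ (since $\|J_\ep\|_{L^1}\sim\ep^{-2}$ and $|y-x|\lesssim\ep$ on the effective support), so the $L^2$-norm of $B_\ep(\zeta)+\Delta\zeta$ over the boundary layer is of order $\ep^{-1/2}$ and does not vanish; a corrector $h_\ep$ cancelling it while keeping $E_\ep(h_\ep)\to0$ is not obviously available, which is precisely why the naive choice $\zeta_\ep=\zeta$ fails in the first place. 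The paper avoids this entirely by an implicit, variational construction: $\zeta_\ep:=\zeta_\Omega+\tilde\zeta_\ep$ where $\tilde\zeta_\ep$ solves the penalized resolvent equation $\ep\tilde\zeta_\ep+B_\ep(\tilde\zeta_\ep)=-\Delta\zeta$. Testing by $\tilde\zeta_\ep$ and invoking the generalized Poincar\'e inequality of \cite[Theorem 1.1]{ponce04} (legitimate since $(\tilde\zeta_\ep)_\Omega=0$) gives uniform bounds on $\|\tilde\zeta_\ep\|_H$ and $E_\ep(\tilde\zeta_\ep)$; the compactness result \cite[Theorem 1.2]{ponce04} yields $\tilde\zeta_\ep\to\tilde\zeta$ in $H$, whence $B_\ep(\tilde\zeta_\ep)\to-\Delta\zeta$ in $H$ by comparison in the equation; the convexity inequality $E_\ep(\tilde\zeta_\ep)+(B_\ep(\tilde\zeta_\ep),\zeta-\tilde\zeta_\ep)_H\leq E_\ep(\zeta)$ passes to the limit and identifies $\tilde\zeta=\zeta-\zeta_\Omega$; finally $E_\ep(\zeta_\ep-\zeta)\to0$ follows by expanding the bilinear form. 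Without this (or an equivalent) construction your argument is a correct reduction, not a proof.
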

\begin{proof}
  Since $(E_\ep(\varphi_\ep))_\ep$
  is uniformly bounded in $\ep$,
  the family $(B_\ep(\varphi_\ep))_\ep$
  is uniformly bounded in $V^*$.
  Indeed, for every $\psi\in V$ there holds
  $$|\langle B_\ep(\varphi_\ep),\psi\rangle_{V}|\leq 2\sqrt{E_\ep(\varphi_\ep)}\sqrt{E_\ep(\psi)}\leq C\|\psi\|_V,$$
  where the last step is a direct consequence of \cite[Theorem 1]{BBM}.
  
  Hence, there exists $\eta\in V^*$ such that $B_\ep(\varphi_\ep)
  \rightharpoonup\eta$ weakly* in $V^*$
  along a subsequence. 
  Moreover, the uniform
  boundedness of
  $(E_\ep(\varphi_\ep))_\ep$ also
  ensures, thanks to
  Lemma~\ref{lem:conv_E}, that 
  $\varphi\in V$.
  Now, we have to show that
  $\eta=B(\varphi)$. To this end, 
  let $\zeta\in W$ be arbitrary
  and define
  \[
  \zeta_\ep:=\zeta_\Omega +
  \tilde\zeta_\ep\,, \qquad
  \ep\tilde\zeta_\ep +
  B_\ep(\tilde\zeta_\ep) = 
  -\Delta \zeta\,.
  \]
  Testing by $\tilde\zeta_\ep$
  we get, by the Young inequality, 
  \[
  \ep\|\tilde\zeta_\ep\|_H^2
  +2E_\ep(\tilde\zeta_\ep)=
  (-\Delta\zeta,
  \tilde\zeta_\ep)_{H}\leq
  \delta\|\tilde\zeta_\ep\|_H^2
  +\frac1{4\delta}\|\Delta\zeta\|_H^2
  \]
  for every $\delta>0$. By definition
  $(\tilde\zeta_\ep)_\Omega=0$, owing to the symmetry of the kernel $J_\ep$ and the definition of $W$.
  Thus, from the generalized
  Poincar\'e inequality contained in 
  \cite[Theorem 1.1]{ponce04}, there exists 
  $c_p>0$, independent of $\ep$, 
  such that 
  $\|\tilde\zeta_\ep\|_H^2\leq c_p
  E_\ep(\tilde\zeta_\ep)$. Choosing then $\delta$ sufficiently small
  (for example $\delta<2/c_p$),
  we deduce that there exists $M>0$
  independent of $\delta$ such that 
  \[
  \|\tilde\zeta_\ep\|_H^2
   + E_\ep(\tilde\zeta_\ep) \leq M\,.
  \]
  Thanks to the compactness result in
  \cite[Theorem~1.2]{ponce04},
  we deduce that there 
  exists $\tilde\zeta\in V$ such that 
  $\tilde\zeta_\ep\to \tilde \zeta$
  strongly in $H$. By comparison in the 
  equation for $\tilde\zeta_\ep$
  it follows that 
  $$B_\ep(\tilde\zeta_\ep)\to-\Delta \zeta\quad\text{strongly in }H.$$ Now, as $B_\ep=DE_\ep$ on $H$,  
  we have that 
  \[
  E_\ep(\tilde\zeta_\ep)
  +(B_\ep(\tilde\zeta_\ep),
  \zeta-\tilde\zeta_\ep)_{H}\leq
  E_\ep(\zeta).
  \]
  Letting $\ep\to0$, 
  this yields 
  thanks to Lemma~\ref{lem:conv_E}
  \[
  E(\tilde\zeta)
  +( -\Delta\zeta,
  z-\tilde\zeta)_{H}\leq
  E(\zeta) ,
  \]
  which implies in turn that 
  $\nabla\zeta=\nabla\tilde\zeta$ almost everywhere in $\Omega$.
  Consequently, recalling that 
  $\tilde\zeta_\Omega=0$, 
  we have that $\tilde\zeta=\zeta-
  \zeta_\Omega$.
  This implies in particular that 
  $\zeta_\ep\to\zeta$ in $H$, 
  and $B_\ep(\zeta_\ep)\to-\Delta\zeta$
  in $H$. Moreover, the symmetry of
  $B_\ep$ and the definition of $E_\ep$
  ensure also that 
  \begin{align*}
  E_\ep(\zeta_\ep-\zeta)&=
  \frac12( B_\ep(\zeta_\ep-\zeta), \zeta_\ep-\zeta)_H=
  E_\ep(\zeta_\ep)
  -\frac12( B_\ep(\zeta_\ep), \zeta)_H
  -\frac12( B_\ep(\zeta), \zeta_\ep)_H
  +E_\ep(\zeta)\\
  &=E_\ep(\zeta_\ep)
  -( B_\ep(\zeta_\ep), \zeta)_H
  +E_\ep(\zeta)\to
  \frac12\|\nabla\zeta\|_H^2
  -( -\Delta\zeta, \zeta)_H
  +\frac12\|\nabla\zeta\|_H^2=0\,.
  \end{align*}
  Now we can conclude. Indeed, 
  on the one hand we have 
  \[
  ( B_\ep(\varphi_\ep),
  \zeta_\ep)_H=
  ( B_\ep(\varphi_\ep),
  \zeta_\ep-\zeta)_H
  +( B_\ep(\varphi_\ep),
  \zeta)_H\to
  ( \eta,
  \zeta)_H
  \]
  since, by Lemma \ref{lem:conv_E} and the fact that
  $E_\ep(\varphi_\ep)\leq M$,
  \[
  |( B_\ep(\varphi_\ep),
  \zeta_\ep-\zeta)_H|\leq
  2\sqrt{E_\ep(\varphi_\ep)}
  \sqrt{E_\ep(\zeta_\ep-\zeta)}\leq
  2M\sqrt{E_\ep(\zeta_\ep-\zeta)}
  \to0\,.
  \]
  On the other hand, 
  \[
  ( B_\ep(\varphi_\ep),
  \zeta_\ep)_H=
  (\varphi_\ep,
  B_\ep(\zeta_\ep))_H
  \to( \varphi,
  -\Delta\zeta)_H=\langle
  B(\varphi),
  \zeta\rangle_V\,.
  \]
  Since $\zeta\in W$ is arbitrary 
  and $W$ is dense in $V$,
  we deduce that $\eta=B(\varphi)$,
  and that the convergence holds
  along the entire sequence by
  uniqueness
  of the limit, as required.
\end{proof}

We conclude this section with a technical result that will play a key role in the study of nonlocal-to-local asymptotics.
\begin{lemma}
  \label{lem:comp}
  For every $\delta>0$, there exist
  $C_\delta >0$ and $\ep_\delta>0$ such that,
  for every sequence 
  $(\varphi_\ep)_{\ep>0}\subset H$ it holds
  \[
  \|\varphi_{\ep_1}-\varphi_{\ep_2}\|_H^2
  \leq\delta\left(
  E_{\ep_1}(\varphi_{\ep_1})+
  E_{\ep_2}(\varphi_{\ep_2})\right)
  +C_\delta\|\varphi_{\ep_1}
  -\varphi_{\ep_2}\|_{V^*}^2
  \quad\forall\,\ep_1,\ep_2\in(0,\ep_\delta)\,.
  \]
\end{lemma}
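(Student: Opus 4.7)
I would argue by contradiction, using Lemma~\ref{lem:conv_E} as a nonlocal Rellich-type compactness statement. The inequality is exactly an Ehrling-type interpolation: the ``strong'' norm is provided by the nonlocal energies $E_{\ep_i}$, which compactly embed into $H$ as $\ep\searrow 0$, and the ``weak'' norm is $V^*$.

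Suppose the conclusion fails. Then there exists $\delta_0>0$ such that for every $n\in\mathbb{N}$ one can find $\ep_1^n,\ep_2^n\in(0,1/n)$ and $\varphi_n^1,\varphi_n^2\in H$ with
\[
  \|\varphi_n^1-\varphi_n^2\|_H^2 > \delta_0\bigl(E_{\ep_1^n}(\varphi_n^1)+E_{\ep_2^n}(\varphi_n^2)\bigr) + n\,\|\varphi_n^1-\varphi_n^2\|_{V^*}^2.
\]
Since all three terms are $2$-homogeneous in $(\varphi_n^1,\varphi_n^2)$, I rescale so that $\|\varphi_n^1-\varphi_n^2\|_H=1$. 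Since adding the same constant to both $\varphi_n^1$ and $\varphi_n^2$ leaves the three terms invariant, I further assume $(\varphi_n^2)_\Omega=0$. With this normalization,
\[
  E_{\ep_1^n}(\varphi_n^1)+E_{\ep_2^n}(\varphi_n^2) < \delta_0^{-1}, \qquad \|\varphi_n^1-\varphi_n^2\|_{V^*}^2 < 1/n.
\]

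Next I need $H$-boundedness of each individual $\varphi_n^i$ in order to invoke Lemma~\ref{lem:conv_E}. For $\varphi_n^2$, the reformulation of $E_\ep$ in terms of $\tilde\rho_\ep$ carried out in the proof of Lemma~\ref{lem:conv_E}, together with the nonlocal Poincar\'e inequality of \cite[Theorem~1.1]{ponce04}, gives a constant $c_p>0$ independent of $n$ such that $\|\varphi_n^2\|_H^2\leq c_p\,E_{\ep_2^n}(\varphi_n^2)\leq c_p/\delta_0$ (using $(\varphi_n^2)_\Omega=0$). The triangle inequality and $\|\varphi_n^1-\varphi_n^2\|_H=1$ then bound $(\varphi_n^1)_n$ in $H$ as well.

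Now, since $\ep_1^n,\ep_2^n\to 0$ and $(E_{\ep_i^n}(\varphi_n^i))_n$ is bounded, the compactness part of Lemma~\ref{lem:conv_E} yields $\varphi^1,\varphi^2\in V$ and subsequences along which $\varphi_n^1\to\varphi^1$ and $\varphi_n^2\to\varphi^2$ strongly in $H$. Passing to the limit in the $V^*$-bound, and using the continuous embedding $H\embed V^*$, I get $\|\varphi^1-\varphi^2\|_{V^*}=0$, hence $\varphi^1=\varphi^2$. Consequently $\|\varphi_n^1-\varphi_n^2\|_H\to 0$, which contradicts the normalization.

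The only non-routine step is securing separate $H$-bounds for $\varphi_n^1$ and $\varphi_n^2$ from the bound on their difference, so that the nonlocal compactness of Lemma~\ref{lem:conv_E} can be applied to each individually; this is handled by the centering $(\varphi_n^2)_\Omega=0$ combined with Ponce's uniform nonlocal Poincar\'e inequality in the form used in the proof of Lemma~\ref{lem:conv_E}.
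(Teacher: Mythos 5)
Your argument is correct and is essentially the paper's own: the paper proves this lemma by invoking the compactness property of Lemma~\ref{lem:conv_E} exactly as in the cited references, where the same Ehrling-type contradiction argument (normalize the difference in $H$, center to apply the uniform nonlocal Poincar\'e inequality, extract strongly $H$-convergent subsequences, and conclude from the vanishing $V^*$-distance) is carried out. Your explicit handling of the separate $H$-bounds for $\varphi_n^1$ and $\varphi_n^2$ is the right way to make the compactness step legitimate.
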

\begin{proof}
  The proof follows from the compactness 
  property in Lemma~\ref{lem:conv_E}, as in
  \cite[Lem.~3]{DST} and \cite[Lem.~4]{DRST}.
\end{proof}

\section{The nonlocal problem}
\label{sec:nonlocal}
This section is devoted to 
the proof of Theorem~\ref{thm1}. Our analysis is performed for a fixed $\ep>0$. In what follows, we will denote by $M_\ep$ a generic constant independent of $\lambda$. 

We consider the auxiliary problem
\begin{align}
    \label{eq1_app}
    \partial_t u_{\ep\lambda}-\Delta\mu_{\ep\lambda}
    =-\div(u_{\ep\lambda}{\bf v}) 
    \qquad&\text{in } (0,T)\times\Omega\,,\\
    \label{eq2_app}
    \mu_{\ep\lambda}=
    (J_\ep*1)u_{\ep\lambda} - J_\ep*u_{\ep\lambda} +
    \gamma_\lambda(u_{\ep\lambda})
    +\Pi(u_{\ep\lambda})
    \qquad&\text{in } (0,T)\times\Omega\,,\\
    \label{eq3_app}
    \partial_{\bf n} \mu_{\ep\lambda}=0
    \qquad&\text{in } (0,T)\times\partial\Omega\,,\\
    \label{eq4_app}
    u_{\ep\lambda}(0)=u_{0,\ep} \qquad&\text{in } \Omega\,,
\end{align}
where, for every $\lambda>0$, $\gamma_\lambda:\mathbb R\to \mathbb R$ is the
Yosida approximation of $\gamma$, having Lipschitz constant $1/\lambda$. 

Since $J_\ep\in W^{1,1}(\mathbb R^d)$, from classical results 
(see \cite{della-grass, gal-gior-grass-NLCH}) it follows 
that the approximated problem \eqref{eq1_app}--\eqref{eq4_app}
admits a unique solution 
$(u_{\ep\lambda}, \mu_{\ep\lambda})$,
with
\[
  u_{\ep\lambda} \in H^1(0,T; V^*)\cap 
  L^2(0,T; V)\,,\qquad
  \mu_{\ep\lambda} \in L^2(0,T; V)\,.
\]

\subsection{Uniform estimates}
In this subsection, we show that the solutions to \eqref{eq1_app}--\eqref{eq4_app} satisfy uniform estimates independent of $\lambda$. Recall that here
$\ep>0$ is fixed.

Testing \eqref{eq1_app} by $\mu_{\ep\lambda}$, 
\eqref{eq2_app} by $\partial_t u_{\ep\lambda}$,
and taking the difference, by Lemma~\ref{lem:diff} we obtain
\begin{align*}
    &\int_{Q_t}|\nabla\mu_{\ep\lambda}(s,x)|^2\,\xd x\,\xd s+
    E_\ep(u_{\ep\lambda}(t))+
    \int_\Omega(\hat\gamma_\lambda(u_{\ep\lambda}(t,x))
    +\hat\Pi(u_{\ep\lambda}(t,x)))\,\xd x\\
    &=E_\ep(u_{0,\ep})+
    \int_\Omega(\hat\gamma_\lambda(u_{0,\ep}(x))
    +\hat\Pi(u_{0,\ep}(x)))\,\xd x +
    \int_{Q_t}u_{\ep\lambda}(s,x){\bf v}(s,x)\cdot
    \nabla \mu_{\ep\lambda}(s,x)\,\xd x\,\xd s\,,
\end{align*}
for every $t\in (0,T)$. From the Young inequality, we infer that 
\begin{align}\label{eq:est-extra1}
\begin{aligned}
    &\frac12\int_{Q_t}|\nabla\mu_{\ep\lambda}(s,x)|^2\,\xd x\,\xd s+
    E_\ep(u_{\ep\lambda}(t))+
    \int_\Omega(\hat\gamma_\lambda(u_{\ep\lambda}(t,x))
    +\hat\Pi(u_{\ep\lambda}(t,x)))\,\xd x\\
    &\leq E_\ep(u_{0,\ep})+ \|\hat\gamma(u_{0,\ep})
    +\hat\Pi(u_{0,\ep})\|_{L^1(\Omega)}
    +\frac12\int_{0}^t
    \|{\bf v}(s)\|_{L^\infty(\Omega)}^2
    \|u_{\ep\lambda}(s)\|^2_{H}\,\xd s\,.
    \end{aligned}
\end{align}
By the generalized Poincar\'e inequality in \cite{ponce04}, we deduce the existence of a constant 
$c_p>0$, independent of $\lambda$, such that 
\[
  \|u_{\ep\lambda} - (u_{\ep\lambda})_\Omega\|_H^2
  \leq c_p E_\ep(u_{\ep\lambda})\,.
\]
Since $(u_{\ep\lambda})_\Omega=(u_{0,\ep})_\Omega$ from
\eqref{eq1_app}, summing and subtracting 
$(u_{0,\ep})_\Omega$ in the third term on
the right-hand side of \eqref{eq:est-extra1}, we have
\begin{align*}
    \frac12\int_{Q_t}|\nabla\mu_{\ep\lambda}(s,x)|^2\,\xd x\,\xd s&+
    E_\ep(u_{\ep\lambda}(t))+
    \int_\Omega(\hat\gamma_\lambda(u_{\ep\lambda}(t,x))
    +\hat\Pi(u_{\ep\lambda}(t,x)))\,\xd x\\
    &\leq E_\ep(u_{0,\ep})+ \|\hat\gamma(u_{0,\ep})
    +\hat\Pi(u_{0,\ep})\|_{L^1(\Omega)}
    +\|{\bf v}\|^2_{L^2(0,T; L^\infty(\Omega))}
    \|u_{0,\ep}\|_H^2\\
    &+{c_p}\int_{0}^t
    \|{\bf v}(s)\|_{L^\infty(\Omega)}^2
    E_\ep(u_{\ep\lambda}(s))\,\xd s\,.
\end{align*}
By the Gronwall lemma and the assumptions on 
$u_{0,\ep}$ and ${\bf v}$, 
there exists a constant $M_\ep>0$, independent of $\lambda$,
such that 
\begin{equation}
  \label{est1}
  \|\nabla\mu_{\ep\lambda}\|_{L^2(0,T; H)}^2
  +\|E_\ep(u_{\ep\lambda})\|_{L^\infty(0,T)}+
  \|u_{\ep\lambda}\|_{L^\infty(0,T; H)}^2\leq M_\ep\,.
\end{equation}
By comparison in \eqref{eq1_app} we also infer that 
\begin{equation}
    \label{est2}
    \|\partial_t u_{\ep\lambda}\|_{L^2(0,T; V^*)}\leq M_\ep\,.
\end{equation}

Now, testing \eqref{eq1_app} by 
$\mathcal N(u_{\ep\lambda}-(u_{0,\ep})_\Omega)$, equation
\eqref{eq2_app} 
by $u_{\ep\lambda}-(u_{0,\ep})_\Omega$, taking the difference, and using the symmetry of the kernel $J_\ep$, yields
\begin{align}
\label{eq:est-extra2}
\begin{aligned}
  &\langle\partial_t u_{\ep\lambda},
  \mathcal N(u_{\ep\lambda}-(u_{0,\ep})_\Omega)
  \rangle_V+ 2 E_\ep(u_{\ep\lambda})
  +\int_\Omega\gamma_\lambda(u_{\ep\lambda}(x))
  (u_{\ep\lambda}(x)-(u_{0,\ep})_\Omega)\,\xd x\\
  &=\int_\Omega u_{\ep\lambda}(x)
  {\bf v}(x)\cdot\nabla\mathcal N
  (u_{\ep\lambda}(x)-(u_{0,\ep})_\Omega)\,\xd x
  -\int_\Omega
  \Pi(u_{\ep\lambda}(x))
  (u_{\ep\lambda}(x)-(u_{0,\ep})_\Omega)\,\xd x\,.
  \end{aligned}
\end{align}
The first two terms on the left-hand side of \eqref{eq:est-extra2}
are bounded in $L^2(0,T)$ independently 
of $\lambda$ thanks to the estimates \eqref{est1}--\eqref{est2}. Owing to
the
properties of $\mathcal N$ and the H\"older 
inequality, the right-hand side of \eqref{eq:est-extra2}
can be estimated from above  by
\[
  \|u_{\ep\lambda}\|_H
  \|{\bf v}\|_{L^\infty(\Omega)}
  \|u_{\ep\lambda}-(u_{0,\ep})_\Omega\|_{V^*}
  +\|\Pi(u_{\ep\lambda})\|_H
  \|u_{\ep\lambda}-(u_{0,\ep})_\Omega\|_{H}\,,
\]
hence they are bounded in $L^2(0,T)$
thanks to the Lipschitz-continuity of $\Pi$, assumption \textbf{H4},
and again \eqref{est1}. 
Moreover, since $(u_{0,\ep})_\Omega
\in\operatorname{Int}D(\gamma)$, there exist
two constants $c_\ep, c_\ep'>0$, 
independent of $\lambda$,
such that 
\[
  \int_\Omega\gamma_\lambda(u_{\ep\lambda}(x))
  (u_{\ep\lambda}(x)-(u_{0,\ep})_\Omega)\,\xd x
  \geq c_\ep
  \|\gamma_\lambda(u_{\ep\lambda})\|_{L^1(\Omega)}
  -c_\ep'\,.
\]
By comparison,
we deduce that
\[
  \|\gamma_\lambda(u_{\ep\lambda})\|_{L^2(0,T; L^1(\Omega))}\leq M_\ep\,.
\]
This yields, after integrating equation
\eqref{eq2_app} on $\Omega$ and using \eqref{est1}, the estimate
\[
  \|(\mu_{\ep\lambda})_\Omega\|_{L^2(0,T)}\leq M_\ep\,,
\]
which together with \eqref{est1} implies
\begin{equation}
    \label{est3}
    \|\mu_{\ep\lambda}\|_{L^2(0,T; V)}\leq M_\ep\,.
\end{equation}

Since $J_\ep\in W^{1,1}(\mathbb R^d)$,
we directly obtain
\begin{align}
\label{eq:est-extra3}
\begin{aligned}
  &\|J_\ep*1\|_{L^\infty(\Omega)}\leq
  \|J_\ep\|_{L^1(\mathbb R^d)}\,,\\
  &\|\nabla(J_\ep*1)\|_{L^\infty(\Omega)}=
  \|(\nabla J_\ep)*1\|_{L^\infty(\Omega)}\leq
  \|\nabla J_\ep\|_{L^1(\mathbb R^d)}\,,\\
  &\|J_\ep*u_{\ep\lambda}\|_H\leq
  \|J_\ep\|_{L^1(\mathbb R^d)}\|u_{\ep\lambda}\|_H
  \,,\\
  &\|\nabla(J_\ep*u_{\ep\lambda})\|_H=
  \|(\nabla J_\ep)*u_{\ep\lambda}\|_H\leq
  \|\nabla J_\ep\|_{L^1(\mathbb R^d)}
  \|u_{\ep\lambda}\|_H\,,
  \end{aligned}
\end{align}
which imply, together with \eqref{est1}, that 
\begin{equation}
    \label{est4}
    \|J_\ep*u_{\ep\lambda}\|_{L^\infty(0,T; V)}
    \leq M_\ep\,.
\end{equation}
Differentiating \eqref{eq2_app} and
testing it against $\nabla u_{\ep\lambda}$, we have
\begin{align*}
  &\int_\Omega
  \left(\gamma_\lambda'(u_{\ep\lambda}(x))
  +\Pi'(u_{\ep\lambda}(x))
  +(J_\ep*1)(x)\right)
  |\nabla u_{\ep\lambda}(x)|^2\,\xd x\\
  &=\int_\Omega 
  \left(\nabla\mu_{\ep\lambda}(x)+
  \nabla(J_\ep*u_{\ep\lambda})(x)
  -\nabla(J_\ep*1)(x)\right)
  \cdot\nabla u_{\ep\lambda}(x)
  \,\xd x\,.
\end{align*}
Recalling now assumption {\bf H3} and using the Young 
inequality yields
\begin{equation}
    \label{eq:est-extra4}
  c_\ep^0\|\nabla u_{\ep\lambda}\|_H^2\leq
  \frac{c_\ep^0}2\|\nabla u_{\ep\lambda}\|_H^2
  +\frac{3c_\ep^0}{2}\|\nabla\mu_{\ep\lambda}\|_H^2
  +\frac{3c_\ep^0}{2}\|\nabla(J_\ep*u_{\ep\lambda})\|_H^2
  +\frac{3c_\ep^0}{2}
  \|\nabla J_\ep\|_{L^1(\mathbb R^d)}^2\,.
\end{equation}
By combining \eqref{eq:est-extra4} with \eqref{est1} and \eqref{est4}, we infer that
\begin{equation}
    \label{est5}
    \|u_{\ep\lambda}\|_{L^2(0,T; V)}\leq M_\ep\,.
\end{equation}
Since $\nabla((J_\ep*1)u_{\ep\lambda})=
(\nabla (J_\ep*1))u_{\ep\lambda}+
(J_\ep*1)\nabla u_{\ep\lambda}$,  estimates \eqref{eq:est-extra4} and
\eqref{est5} 
yield
\begin{equation}
    \label{est6}
    \|(J_\ep*1)u_{\ep\lambda}\|_{L^2(0,T; V)}
    \leq M_\ep\,.
\end{equation}
By comparison in equation \eqref{eq2_app}, in view of \eqref{est3}--\eqref{est6} and the Lipschitz regularity of $\Pi$,
we deduce the bound
\begin{equation}
    \label{est7}
    \|\gamma_\lambda(u_{\ep\lambda})\|_{L^2(0,T; V)}
    \leq M_\ep\,.
\end{equation}

\subsection{Passage to the limit as $\lambda\searrow0$}
We conclude by passing to the limit as $\lambda\searrow0$
in the approximating problem
\eqref{eq1_app}--\eqref{eq4_app}, with $\ep>0$ fixed.

Estimates \eqref{est1}--\eqref{est7} 
and the classical Aubin-Lions compactness results
(see \cite[Cor.~4]{simon}) ensure
that there exist
\[
  u_\ep\in H^1(0,T; V^*)\cap L^2(0,T; V)\,, \qquad
  \mu_\ep\in L^2(0,T; V)\,, \qquad
  \xi_\ep\in L^2(0,T; V)
\]
such that, as $\lambda\searrow0$,
\begin{align*}
    u_{\ep\lambda}\to u_\ep 
    \quad&\text{in } L^2(0,T; H)\,,\\
    u_{\ep\lambda}\rightharpoonup u_\ep 
    \quad&\text{in } H^1(0,T; V^*)\cap L^2(0,T; V)\,,\\
    \mu_{\ep\lambda}\rightharpoonup\mu_\ep
    \quad&\text{in } L^2(0,T; V)\,,\\
    \gamma_\lambda(u_{\ep\lambda})\rightharpoonup\xi_\ep
    \quad&\text{in } L^2(0,T; V)\,.
\end{align*}
The strong-weak closure of the maximal monotone
operator $\gamma$ (see \cite[\S~2]{barbu-monot})
readily implies that $\xi_\ep\in\gamma(u_\ep)$
almost everywhere in $Q$, while the Lipschitz-continuity
of $\Pi$ ensures that
\[
  \Pi(u_{\ep\lambda})\to\Pi(u_\ep)
    \quad\text{in } L^2(0,T; H)\,.
\]
Moreover, since
$H^1(0,T; V^*)\cap L^2(0,T; V)\embed C^0([0,T]; H)$, 
we also have $u_\ep(0)=u_{0,\ep}$.
Finally, the properties of the convolution
and the strong convergence 
of $(u_{\ep\lambda})_\lambda$ yield
\begin{align*}
    (J_\ep*1)u_{\ep\lambda}\to 
    (J_\ep*1)u_\ep \quad&\text{in } L^2(0,T; H)\,,\\
    J_\ep*u_{\ep\lambda}\to 
    J_\ep*u_\ep \quad&\text{in } L^2(0,T; H)\,.
\end{align*}
Passing then to the weak limit in 
the approximated problem \eqref{eq1_app}--\eqref{eq4_app},
we complete the proof of Theorem~\ref{thm1}. The uniqueness of solution is obtained arguing analogously to~\cite{DRST,DST}.

\section{The nonlocal-to-local asymptotics}
\label{sec:nonlocal-to-local}
In this section, we perform the limit as $\ep\searrow 0$ in the nonlocal problem in Theorem~\ref{thm1},
and we prove Theorem~\ref{thm2}.

We first note that the assumptions of 
Theorem~\ref{thm2} ensure that the constants 
$(M_\ep)_{\ep>0}$ in estimates
\eqref{est1}, \eqref{est2}, and \eqref{est3}
are uniformly bounded in $\ep$.
Hence, by weak lower semicontinuity we infer that 
there exists a constant $M>0$, independent of $\ep$,
such that 
\begin{equation}\label{est1_ep}
  \|\mu_{\ep}\|_{L^2(0,T; V)}^2
  +\|E_\ep(u_{\ep})\|_{L^\infty(0,T)}+
  \|u_{\ep}\|_{H^1(0,T; V^*)\cap 
  L^\infty(0,T; H)}^2\leq M\,.
\end{equation}
Testing \eqref{eq2_NL} by $\xi_\ep$, 
rearranging the terms, and taking 
\eqref{est1_ep} into account yields
\begin{align}
\label{eq:extra-fin}
\begin{aligned}
  &\int_Q|\xi_\ep(s,x)|^2\,\xd x\,\xd s
  + \frac12\int_Q J_\ep(x-y)(\xi_\ep(s,x)-\xi_\ep(s,y))
  (u_\ep(s,x)-u_\ep(s,y))\,\xd x\,\xd y\,\xd s\\
  &=\int_Q(\mu_\ep-\Pi(u_\ep))(s,x)
  \xi_\ep(s,x)\,\xd x\,\xd s
  \leq\frac12\int_Q|\xi_\ep(s,x)|^2\,\xd x\,\xd s +M\,.
  \end{aligned}
\end{align}
The second term on the left hand side of \eqref{eq:extra-fin}
is nonnegative as $\xi_\ep\in\gamma(u_\ep)$, and because of \textbf{H3}.
Thus, we infer that 
\begin{equation}
    \label{est2_ep}
    \|\xi_\ep\|_{L^2(0,T; H)}\leq M\,.
\end{equation}
By comparison in \eqref{eq2_NL}, it follows that 
\begin{equation}
    \label{est3_ep}
    \|(J_\ep*1)u_\ep 
    - J_\ep*u_\ep\|_{L^2(0,T; H)}\leq M\,.
\end{equation}

Thanks to Aubin-Lions compactness 
results, estimates \eqref{est1_ep}--\eqref{est3_ep}
imply that there exist
\[
  u\in H^1(0,T; V^*)\cap L^\infty(0,T; H)\,,\qquad
  \mu\in L^2(0,T; V)\,, \qquad
  \xi,\eta\in L^2(0,T; H)\,,
\]
such that, as $\ep\searrow0$,
\begin{align*}
    u_\ep\to u \quad&\text{in } C^0([0,T]; V^*)\,,\\
    u_\ep\wstarto u 
    \quad&\text{in } 
    L^\infty(0,T; H)\cap 
    H^1(0,T;V^*)\,,\\
    \mu_\ep\rightharpoonup \mu 
    \quad&\text{in } L^2(0,T; V)\,,\\
    \xi_\ep\rightharpoonup \xi 
    \quad&\text{in } L^2(0,T; H)\,,\\
    (J_\ep*1)u_\ep-J_\ep*1\rightharpoonup \eta
    \quad&\text{in } L^2(0,T; H)\,.
\end{align*}
The strong convergence of $(u_\ep)_\ep$
 yields $u(0)=u_0$.
Furthermore, Lemma~\ref{lem:comp} 
and estimate \eqref{est1_ep} imply that 
for every $\delta>0$ there exists $C_\delta>0$
and $\ep_\delta>0$
such that 
\begin{align*}
    \|u_{\ep_1}-u_{\ep_2}\|_{C^0([0,T]; H)}^2&\leq
    \delta\|E_{\ep_1}(u_{\ep_1})+
    E_{\ep_2}(u_{\ep_2})\|_{L^\infty(0,T)}
    +C_\delta\|u_{\ep_1}
    -u_{\ep_2}\|_{C^0([0,T]; V^*)}^2\\
    &\leq 2M\delta + C_\delta \|u_{\ep_1}
    -u_{\ep_2}\|_{C^0([0,T]; V^*)}^2\,.
\end{align*}
Since $\delta>0$ is arbitrary and
$(u_\ep)_\ep$ converges strongly in 
$C^0([0,T]; V^*)$, we deduce that
\[
  u_\ep\to u \quad\text{in } C^0([0,T]; H)\,.
\]
This readily implies that $\xi\in\gamma(u)$
by the strong-weak closure of $\gamma$, and that 
\[
  \Pi(u_\ep)\to \Pi(u) \quad\text{in } C^0([0,T]; H)
\]
by the Lipschitz-continuity of $\Pi$.

Passing to the limit in
the weak formulation
of \eqref{eq1_NL}--\eqref{eq4_NL} we infer,
by the dominated convergence theorem, that 
\[
  \langle\partial_t u,\varphi\rangle_V + 
  \int_\Omega\nabla\mu(x)\cdot\nabla\varphi(x)\,\xd x = 
  \int_\Omega u(x){\bf v}(x)\cdot\nabla\varphi(x)\,\xd x
\]
for every $\varphi\in V$, almost everywhere in $(0,T)$,
and that $\mu=\eta+\xi+\Pi(u)$.

It only remains to show that $u\in L^\infty(0,T; V)
\cap L^2(0,T; W)$ and $\eta=-\Delta u$.
To this end, note that Lemma~\ref{lem:conv_E}
and estimate \eqref{est1_ep}
imply that
\[
  \|E(u)\|_{L^\infty(0,T)}
  \leq\liminf_{\ep\to0}\|E_\ep(u_\ep)\|_{L^\infty(0,T)}
  \leq M\,,
\]
which ensures that $u\in L^\infty(0,T; V)$.

Moreover, by Lemma~\ref{lem:diff} we know that,
for every $\varphi\in L^2(0,T; V)$,
\[
  \int_0^TE_\ep(u_\ep(s))\,\xd s
  +\int_Q
  \left((J_\ep*1)u_\ep-J_\ep*u_\ep\right)
  (s,x)(\varphi-u_\ep)(s,x)\,\xd x\,\xd s
  \leq \int_0^TE_\ep(\varphi(s))\,\xd s\,.
\]
Since $u_\ep\to u$ in $C^0([0,T]; H)$, 
Lemma~\ref{lem:conv_E}, \cite[Theorem 1]{BBM}, and Fatou's lemma
imply that 
\[
  \frac12\int_Q|\nabla u(s,x)|^2\,\xd x\,\xd s
  +\int_Q\eta(s,x)
  (\varphi-u)(s,x)\,\xd x\,\xd s
  \leq \frac12\int_Q|\nabla \varphi(s,x)|^2\,\xd x\,\xd s
\]
for every $\varphi\in L^2(0,T; V)$. Hence, we deduce that
$\eta\in \partial E(u)$, so that 
\[
  \int_\Omega\eta(x)\varphi(x)\,\xd x=
  \int_\Omega\nabla u(x)\cdot\nabla\varphi(x)\,\xd x
  \quad\forall\,\varphi\in V\,.
\]
By the classical elliptic regularity theory we
infer that $u\in L^2(0,T; W)$, $\eta=-\Delta u$,
and $\partial_{\bf n}u=0$ almost everywhere on $\partial\Omega$, as
required. For more details, we refer the reader 
to \cite{DST}.
This completes the proof of Theorem~\ref{thm2}.

\section{Extensions and applications}
\label{sec:appl}

In this section we present an overview of some settings in which a direct adaptation of Theorems \ref{thm1} and \ref{thm2} provides existence and nonlocal-to-local convergence.\\

\textbf{Periodic boundary conditions}. This modeling assumption  is equivalent to consider $\Omega$ as the $d$-dimensional flat torus. Existence, regularity, and nonlocal-to-local convergence have been analyzed in~\cite{MRT18,DRST} for interaction kernels enjoying much weaker integrability assumptions, and under slightly different hypotheses on the convection velocity $\textbf{v}$. 
As a consequence of the periodicity of the problem, $J_\ep*1$ is constant over the domain and $\nabla (J_\ep*u_\ep)=J_\ep*\nabla u_\ep =\nabla J_\ep *u_\ep$ hold. The arguments in the proofs of Theorems \ref{thm1} and \ref{thm2} directly yield an extension of the results for regular kernel also to the setting of periodic boundary conditions.\\

\textbf{Viscous case and forcing term}. Consider the following viscous nonlocal problem
\begin{align}
    \label{eq1_NL2}
    \partial_t u_\ep-\Delta\mu_\ep
    =-\div (u_\ep{\bf v}) \qquad&\text{in } (0,T)\times\Omega\,,\\
    \label{eq2_NL2}
    \mu_\ep = \tau_\ep\partial_t u_\ep + (J_\ep*1)u_\ep - J_\ep*u_\ep + \gamma(u_\ep)
    +\Pi(u_\ep)-g_\ep \qquad&\text{in } (0,T)\times\Omega\,,\\
    \label{eq3_NL2}
    \partial_{\bf n} \mu_\ep=0
    \qquad&\text{in } (0,T)\times\partial\Omega\,,\\
    \label{eq4_NL2}
    u_\ep(0)=u_{0,\ep} \qquad&\text{in } \Omega\,,
\end{align}
and its local counterpart
\begin{align}
    \label{eq1_L2}
    \partial_t u-\Delta\mu
    =-\div (u{\bf v}) \qquad&\text{in } (0,T)\times\Omega\,,\\
    \label{eq2_L2}
    \mu = \tau \partial_t u-\Delta u + \gamma(u)
    +\Pi(u)-g \qquad&\text{in } (0,T)\times\Omega\,,\\
    \label{eq3_L2}
    \partial_{\bf n} u=\partial_{\bf n} \mu=0
    \qquad&\text{in } (0,T)\times\partial\Omega\,,\\
    \label{eq4_L2}
    u(0)=u_{0} \qquad&\text{in } \Omega\,.
\end{align}
For the nonlocal problem $\tau_\ep\geq 0$ is a viscosity coefficient and $g_\ep$ represents a distributed forcing term, while for the local one $\tau\geq 0$ is the limiting viscosity parameter. In particular, the choices $\tau>0$ and $\tau = 0$ correspond to the viscous case and pure case. The above setting, without the convection term in divergence form, has been studied in~\cite{DRST} under much weaker regularity assumptions on the interaction kernels. In particular, in~\cite{DRST} existence and convergence were proven by assuming $\tau_\ep>0$ for every $\ep>0$. 

Under $W^{1,1}$-regularity of the kernel, in problem~\eqref{eq1_NL2}--\eqref{eq4_NL2} (involving viscosity, a convective contribution, and a forcing term) we can prove existence also for $\tau_\ep=0$, and we can weaken  hypothesis \textbf{H3} on ${\bf v}$.

To be precise, the statement of the existence result reads as follows.
\begin{theorem}
  Let $\ep>0$ and 
  $\tau_\ep\geq 0$ be fixed. Then,
  for every $(u_{0,\ep},g_\ep)$ such that 
  $$u_{0,\ep}\in H, \quad \tau_\ep u_{0,\ep}\in V\,, \qquad\hat\gamma(u_{0,\ep})\in L^1(\Omega),\qquad
  (u_{0,\ep})_\Omega\in\operatorname{Int}
  D(\gamma),$$ and 
  $$g_\ep\in L^2(0,T;H) \quad\text{if } \tau_\ep>0\,, \qquad
  g_\ep\in H^1(0,T;H) \quad\text{if } \tau_\ep=0\,,
  $$
  there exists a triplet $(u_\ep, \mu_\ep, \xi_\ep)$, where $u_\ep$ is uniquely 
  determined,
  such that 
  \begin{align*}
      &u_\ep \in H^1(0,T; V^*)
      \cap L^2(0,T; V)\,,
      \qquad
      \mu_\ep \in L^2(0,T; V)\,,
      \qquad
      \xi_\ep\in L^2(0,T; V)\,,\\
      &\tau_\ep u_\ep \in 
      H^1(0,T;H)\cap L^\infty(0,T; V)\,,
      \qquad\tau_\ep\mu_\ep\in L^2(0,T; W)\,,\\
      &u_\ep(0)=u_{0,\ep}\\
      &\mu_\ep=\tau_\ep \partial_t u_\ep +(J_\ep*1)u_\ep - J_\ep*u_\ep + \xi_\ep+
      \Pi(u_\ep)-g_\ep\,,\\
      &\xi_\ep\in\gamma(u_\ep) \quad\text{a.e.~in } Q\,.
  \end{align*}
\end{theorem}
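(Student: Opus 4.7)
The plan is to mirror the Yosida-regularisation strategy of Theorem~\ref{thm1}: replace $\gamma$ by $\gamma_\lambda$, obtain an approximate solution $(u_{\ep\lambda},\mu_{\ep\lambda})$ of the viscous-forced system through the classical theory for $W^{1,1}$-kernels (combined, if $\tau_\ep=0$, with a Faedo-Galerkin discretisation to ensure enough regularity for the time-testing), derive $\lambda$-uniform estimates, and pass to the limit. All bounds in Section~\ref{sec:nonlocal} not involving the forcing term carry over with minor changes, since $\ep$ is kept fixed and the viscous contribution only helps; the genuinely new point is the treatment of $g_\ep$, which splits into two regimes according to whether $\tau_\ep$ is positive or zero.

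The basic energy identity is produced exactly as for \eqref{eq:est-extra1}, by testing the evolution equation by $\mu_{\ep\lambda}$, the constitutive relation by $\partial_t u_{\ep\lambda}$, and subtracting. Two new contributions appear: the coercive viscous term $\tau_\ep\int_0^t\|\partial_s u_{\ep\lambda}\|_H^2\,\xd s$ and the forcing work $\int_{Q_t}g_\ep\,\partial_s u_{\ep\lambda}\,\xd x\,\xd s$. When $\tau_\ep>0$, Young's inequality absorbs the latter into the former under the sole assumption $g_\ep\in L^2(0,T;H)$, and provides moreover the bound $\sqrt{\tau_\ep}\|\partial_t u_{\ep\lambda}\|_{L^2(0,T;H)}\leq M_\ep$. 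When $\tau_\ep=0$ this route is unavailable and one integrates by parts in time,
\[
\int_{Q_t}g_\ep\,\partial_s u_{\ep\lambda}\,\xd x\,\xd s = (g_\ep(t),u_{\ep\lambda}(t))_H - (g_\ep(0),u_{0,\ep})_H - \int_{Q_t}\partial_s g_\ep\,u_{\ep\lambda}\,\xd x\,\xd s,
\]
which is legitimate precisely because of the stronger hypothesis $g_\ep\in H^1(0,T;H)\embed C^0([0,T];H)$, and because $u_{\ep\lambda}\in C^0([0,T];H)$ by Aubin-Lions; this is exactly what forces the dichotomy in the hypotheses on $g_\ep$. In either case the right-hand side is controlled by $E_\ep(u_{\ep\lambda})$ via the generalised Poincar\'e inequality of \cite{ponce04}, and Gronwall's lemma yields the analogue of \eqref{est1}.

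From this point onwards, the estimates on $\gamma_\lambda(u_{\ep\lambda})$, $\mu_{\ep\lambda}$, $u_{\ep\lambda}$ in $L^2(0,T;V)$ and on $\xi_\ep$ in $L^2(0,T;V)$ are obtained as in Section~\ref{sec:nonlocal}, using assumption~\textbf{H3} and the $W^{1,1}$-bounds on $J_\ep$ collected in~\eqref{eq:est-extra3}. The extra viscous regularity $\tau_\ep u_{\ep\lambda}\in H^1(0,T;H)\cap L^\infty(0,T;V)$ and $\tau_\ep\mu_{\ep\lambda}\in L^2(0,T;W)$ then follows from the $L^2(H)$-control of $\tau_\ep\partial_t u_{\ep\lambda}$ combined with elliptic regularity applied to~\eqref{eq2_NL2}, viewed as a linear Neumann problem for $\tau_\ep\mu_{\ep\lambda}$ with right-hand side in $L^2(0,T;H)$. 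Finally, the $\lambda\searrow0$ passage is carried out by Aubin-Lions compactness: the strong-weak closure of $\gamma$ identifies $\xi_\ep\in\gamma(u_\ep)$, Lipschitz continuity of $\Pi$ passes the potential term, and linearity together with the $W^{1,1}$-mapping properties of $J_\ep*$ handles the convolution terms.

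The step I expect to be most delicate is the $\tau_\ep=0$ case, where no viscous damping is available and the time-regularity of $u_{\ep\lambda}$ is only $H^1(0,T;V^*)$; however, the integration-by-parts identity written above is stable under weak-$*$ convergence in $L^\infty(0,T;H)$ combined with strong convergence in $C^0([0,T];V^*)$, so no further compactness is needed. Uniqueness of $u_\ep$ is obtained as in \cite{DRST,DST}, by writing the equation solved by the difference of two solutions, testing against $\mathcal N$ of the difference (with the convective, viscous, and chemical-potential contributions absorbed via Young's inequality and the $L^\infty$-bound on $\bf v$), and using the monotonicity of $\gamma$ and the Lipschitz continuity of $\Pi$.
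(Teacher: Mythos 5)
Your overall strategy coincides with the one the paper intends (the paper omits the details of this proof as ``standard'', referring to the scheme of Section~\ref{sec:nonlocal}): Yosida regularisation, the energy identity obtained by testing \eqref{eq1_NL2} by $\mu_{\ep\lambda}$ and \eqref{eq2_NL2} by $\partial_t u_{\ep\lambda}$, and the passage to the limit via Aubin--Lions and maximal monotonicity. In particular, the dichotomy on $g_\ep$ --- absorbing $(g_\ep,\partial_t u_{\ep\lambda})_H$ into $\tau_\ep\|\partial_t u_{\ep\lambda}\|_H^2$ when $\tau_\ep>0$, and integrating by parts in time (at a sufficiently regular approximation level) when $\tau_\ep=0$ --- is exactly the point that explains the two different hypotheses on $g_\ep$, and you identify it correctly.

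There is, however, a gap in how you derive the extra regularity claimed in the statement. First, $\tau_\ep\mu_\ep\in L^2(0,T;W)$ cannot come from ``elliptic regularity applied to \eqref{eq2_NL2}'': that relation is algebraic in $\mu_{\ep\lambda}$ and contains no Laplacian of $\mu_{\ep\lambda}$. The elliptic problem for $\mu_{\ep\lambda}$ is \eqref{eq1_NL2}, i.e.\ $-\Delta(\tau_\ep\mu_{\ep\lambda})=-\tau_\ep\partial_t u_{\ep\lambda}-\tau_\ep\div(u_{\ep\lambda}{\bf v})$ with homogeneous Neumann data, and one must check that its right-hand side lies in $L^2(0,T;H)$, which uses the viscous bound on $\tau_\ep\partial_t u_{\ep\lambda}$ together with the $L^\infty(0,T;V)$ bound on $\tau_\ep u_{\ep\lambda}$ and \textbf{H4}. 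Second, and more importantly, the bound $\tau_\ep u_\ep\in L^\infty(0,T;V)$ does not follow from the $L^2(0,T;H)$ control of $\tau_\ep\partial_t u_{\ep\lambda}$: that only yields $\tau_\ep u_\ep\in H^1(0,T;H)\subset C^0([0,T];H)$. It is obtained from the analogue of \eqref{eq:est-extra4}: differentiating \eqref{eq2_NL2} in space and testing by $\nabla u_{\ep\lambda}$ produces the additional term $\tau_\ep(\nabla\partial_t u_{\ep\lambda},\nabla u_{\ep\lambda})_H=\tfrac{\tau_\ep}{2}\tfrac{\xd}{\xd t}\|\nabla u_{\ep\lambda}\|_H^2$, which upon integration in time gives the $L^\infty(0,T;V)$ bound provided $\tau_\ep\nabla u_{0,\ep}\in H$. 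This is precisely where the hypothesis $\tau_\ep u_{0,\ep}\in V$ enters --- a hypothesis your argument never invokes, which is the telltale sign of the missing step.
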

We observe that if $\tau_\ep$
is strictly positive we can relax the hypothesis on ${\bf v}$.
Since the computations are standard,
we omit here the details.
The nonlocal-to-local convergence 
reads exactly as in \cite[Thm.~3.3]{DST}.

\section{Connections with the theory of convergence of gradient flows}
\label{sec:SS}

We conclude this paper by highlighting the connection between the results in Theorem \ref{thm1} and the notion of evolutionary $\Gamma$-convergence for gradient flows introduced by E. Sandier and S. Serfaty in \cite{sand-serf} (see also \cite{serf} for an overview). In the absence of convection ($\mathbf{v}=0$), the nonlocal-to-local convergence in Theorem \ref{thm2}, as well as some of the a-priori estimates established in the proofs of Theorems \ref{thm1} and \ref{thm2} present many similarities with the abstract scheme developed in \cite[Theorem 1.4]{sand-serf}. For convenience of the reader, we recall this seminal result below, and proceed by showing the connection between the theorem below and the proof strategy in our setting.

\begin{theorem}[\cite{sand-serf}]
For every $\ep>0$, let $X_\ep$ be a Hilbert space, and let $$\mathcal{E}_\ep:X_\ep\to [0;+\infty)$$ be a $C^1$-functional. Assume also that $X$ is a Hilbert space, and that $$\mathcal{E}:X\to [0;+\infty)$$ is a $C^1$-functional. For every $\ep>0$, let $u_\ep^0\in X_\ep$, let $u^0\in X$, and assume that $u_\ep^0\to^\tau u^0$ in a suitable topology $\tau$, and that 
$$\lim_{\ep\to 0} \mathcal{E}_\ep(u_\ep^0)= \mathcal{E}(u^0).$$

Let $(u_\ep)_\ep$ be a family of conservative solutions for $\mathcal{E}_\ep$ on $[0,T)$ starting from $(u_\ep^0)_\ep$, namely such that $u_\ep\in H^1(0,T;X_\ep)$, for almost every $t\in [0,t)$, 
$$\partial_t u_\ep=-D_{X_\ep}\mathcal{E}_\ep(u_\ep)\in X_\ep,$$
and for every $t\in [0,T)$,
\begin{equation}
    \label{eq:en-cons}
\mathcal{E}_\ep(u_\ep^0)-\mathcal{E}_\ep(u_\ep(t))=\int_0^t\|\partial_t u_\ep(s)\|^2_{X_\ep}\,\xd s. 
\end{equation}
Assume also that the energy functionals $(\mathcal{E}_\ep)_\ep$ and $\mathcal{E}$ enjoy the following two properties:

\begin{align*}
&\textbf{C1}) \text{ if for every }\ep>0\text{ there holds } v_\ep\in H^1(0,T;X_\ep),\text{ and for a subsequence } v_\ep(t)\to^\tau v(t)\\
&\quad\text{ for every }t\in [0,T), \text{ then }v\in H^1(0,T;X),\text{ and for every }s\in [0,t),\\
&\qquad \qquad\qquad\qquad
\liminf_\ep \int_0^s \|\partial_t v_\ep(s)\|^2\xd s\geq \int_0^s \|\partial_t v(s)\|^2\xd s;\\
&\textbf{C2}) \text{ if for every }\ep>0\text{ there holds }w_\ep\in X_\ep,\text{ and }w_\ep\to^\tau w\text{ with }w\in X,\text{ then }\\
&\qquad \qquad\qquad\qquad\liminf_\ep \|D_{X_\ep}\mathcal{E}_\ep(w_\ep)\|_{X_\ep}\geq \|D_X \mathcal{E}(w)\|_{X}.
\end{align*}
Eventually, assume that there exists a map $u\in H^1(0,T;X)$ such that, for every $t\in[0,T)$, $u_\ep(t)\to^\tau u(t)$, and
$$\liminf_{\ep}\mathcal{E}_\ep(u_\ep(t))\geq \mathcal{E}(u(t)).$$
Then, $u$ is the solution in $[0,T)$ to the gradient flow associated to the energy functional $\mathcal{E}$, in the structure given by $X$, and with initial datum $u^0$.
\end{theorem}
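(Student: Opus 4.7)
The plan is to follow the classical Sandier--Serfaty energy-dissipation strategy. The starting point is the conservation identity \eqref{eq:en-cons}, which I would rewrite using the gradient-flow equation $\partial_t u_\ep = -D_{X_\ep}\mathcal{E}_\ep(u_\ep)$ (so that $\|\partial_t u_\ep\|_{X_\ep} = \|D_{X_\ep}\mathcal{E}_\ep(u_\ep)\|_{X_\ep}$) by splitting the dissipated energy evenly into a kinetic and a slope contribution:
\begin{equation*}
\mathcal{E}_\ep(u_\ep^0) - \mathcal{E}_\ep(u_\ep(t)) = \frac{1}{2}\int_0^t \|\partial_t u_\ep(s)\|_{X_\ep}^2\,\xd s + \frac{1}{2}\int_0^t \|D_{X_\ep}\mathcal{E}_\ep(u_\ep(s))\|_{X_\ep}^2\,\xd s.
\end{equation*}
This artificial splitting is exactly what makes \textbf{C1} and \textbf{C2} usable simultaneously.

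Next, I would pass to the $\liminf$ as $\ep\searrow 0$. The left-hand side converges to $\mathcal{E}(u^0)-\mathcal{E}(u(t))$, using the prescribed convergence of the initial energies together with the liminf bound at time $t$ assumed in the statement. On the right-hand side, \textbf{C1} gives the liminf inequality for the kinetic term, while \textbf{C2}, applied pointwise in $s$ together with Fatou's lemma, gives the liminf for the slope term. This produces
\begin{equation*}
\mathcal{E}(u^0) - \mathcal{E}(u(t)) \geq \frac{1}{2}\int_0^t \|\partial_t u(s)\|_X^2\,\xd s + \frac{1}{2}\int_0^t \|D_X\mathcal{E}(u(s))\|_X^2\,\xd s.
\end{equation*}

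The complementary inequality comes from the chain rule. Since $u\in H^1(0,T;X)$ and $\mathcal{E}$ is $C^1$, the composition $t\mapsto\mathcal{E}(u(t))$ is absolutely continuous with derivative $\langle D_X\mathcal{E}(u(t)),\partial_t u(t)\rangle_X$. Applying Cauchy--Schwarz and the Young inequality gives
\begin{equation*}
\mathcal{E}(u(t)) - \mathcal{E}(u^0) \geq -\frac{1}{2}\int_0^t\|\partial_t u(s)\|_X^2\,\xd s - \frac{1}{2}\int_0^t\|D_X\mathcal{E}(u(s))\|_X^2\,\xd s.
\end{equation*}
Combining the two bounds forces equality throughout; in particular, the Cauchy--Schwarz/Young step is saturated for a.e.\ $s\in(0,t)$, which yields both $\partial_t u(s) = -D_X\mathcal{E}(u(s))$ in $X$ and $\|\partial_t u(s)\|_X = \|D_X\mathcal{E}(u(s))\|_X$. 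Together with $u(0)=u^0$, obtained from the convergence of the initial data, this identifies $u$ as the unique conservative gradient flow of $\mathcal{E}$ starting from $u^0$.

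The main obstacle lies in the liminf passage for the slope term: \textbf{C2} must be applied slice-by-slice in $s$, so measurability and an $\ep$-uniform $L^2$-in-time envelope for $s\mapsto\|D_{X_\ep}\mathcal{E}_\ep(u_\ep(s))\|_{X_\ep}$ is needed to justify Fatou's lemma. This envelope is precisely furnished by the energy-dissipation identity \eqref{eq:en-cons} itself, which bounds the integrated squared slope uniformly by $\mathcal{E}_\ep(u_\ep^0)$. A subtler point is that the limit slope $\|D_X\mathcal{E}(u(\cdot))\|_X$ must itself be $L^2$-integrable in order to legitimise the chain rule used in the reverse inequality; this integrability drops out a posteriori from the first liminf bound, closing the argument.
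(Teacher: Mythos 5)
Your argument is the standard Sandier--Serfaty proof and it is correct: the even splitting of the dissipation into kinetic and slope halves, the liminf passage via \textbf{C1}, \textbf{C2} and Fatou, the chain-rule lower bound, and the saturation of the Cauchy--Schwarz/Young step are exactly the scheme of the cited reference. Note that the paper itself states this theorem without proof (it is recalled verbatim from \cite{sand-serf}), so there is nothing to diverge from; the only small over-caution in your write-up is the worry about an integrable envelope for Fatou's lemma, which is unnecessary since the integrands are nonnegative and one may pass to a subsequence realizing the liminf.
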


The theorem above can not directly be applied in our framework. Indeed, despite the nonlocal energy functional $E_\ep$ in \eqref{eq:nl-en} is of class $C^1$ (see Lemma \ref{lem:diff}), the overall driving energy functional for the evolution equation in Theorem \ref{thm1} does not satisfy this regularity assumption, owing to the presence of the singular double-well potential. As pointed out in \cite[Section 2.2]{serf}, our convergence result should thus be read within the theory of evolutionary $\Gamma$-convergence of gradient flows in the more general setting of metric spaces. We refrain from introducing the full formalism here: we proceed by just briefly highlighting the main connections between the general theory in \cite{sand-serf, serf} and our proof strategy.

We first point out that the solutions to the nonlocal Cahn-Hilliard equations constructed in Theorem \ref{thm1}
(with ${\bf v}=0$) satisfy 
$u_\ep\in H^1(0,T;V^*)$ and 
$-\partial_t u_\ep\in 
\partial_{V^*}
\mathcal{E}_\ep^{NL}(u_\ep)$ as elements of $V^\ast$, where 
the subdifferential $\partial_{V^*}$
is intended in the dual space $V^*$,
$\mathcal{E}_\ep^{NL}$ is the functional defined in \eqref{eq:nl-en}, and the double-well potential $F$ is defined as $F=\hat{\gamma}+\hat{\Pi}$ (see also \textbf{H3}).

Additionally, the fact that $u_\ep$ is a conservative solution for $\mathcal{E}_{\ep}^{NL}$ in the sense of \eqref{eq:en-cons} with the choice 
$X_\ep=X=V^*$
follows directly by testing \eqref{eq:grad-flow} with 
$(-\Delta)^{-1}
\partial_t u_\ep$ and integrating the resulting equation with respect to time. Consider now a sequence of initial data $(u_{0,\ep})_\ep$ as in Theorem \ref{thm2}. In view of Theorem \ref{thm2} and Lemma \ref{lem:conv_E}, we deduce that there exists $u\in C([0,T];H)$ for which

$$\liminf_\ep \mathcal{E}_\ep^{NL}(u_\ep(t))\geq E(u(t))+\int_\Omega F(u(t,x))\xd x\quad\text{for every }t\in [0,T).$$
Eventually, conditions \textbf{C1}) and \textbf{C2}) follow directly by \eqref{eq1-f} and \eqref{eq2-f}. A crucial peculiarity of our setting is in the fact that, in order to guarantee the convergence of the gradient flows, it is also necessary to prove \eqref{eq3-f}. The thesis follows then in Theorem \ref{thm2} owing to the closure of the subdifferential of the double-well potential with respect to the above convergences.
\section{Conclusions}
\label{sec:con}
We have presented an existence result for a nonlocal Cahn-Hilliard equation with $W^{1,1}$ interaction kernel, under Neumann boundary conditions, and keeping track of the effects of a convective term in divergence form. Relying on the a-priori estimates identified in the proof of the existence result, we have shown convergence to a local Cahn-Hilliard equation as the kernel approaches a Dirac delta. This is the first nonlocal-to-local asymptotics for nonlocal Cahn-Hilliard equations satisfying homogeneous Neumann conditions and in the absence of regularizing viscous terms. Eventually, we have presented some extensions to the setting of periodic boundary conditions and to that of viscous Cahn-Hilliard equations, as well as a comparison with the classical theory of evolutionary convergence of gradient flows.

\section*{Acknowledgements}

E.D and L.T. have been supported by the Austrian Science Fund (FWF) project F 65. E.D. has been funded by the Austrian Science Fund (FWF) project V 662 N32. The research of E.D. has been additionally supported from the Austrian Science Fund (FWF) through the grant I 4052 N32, and from BMBWF through the OeAD-WTZ project CZ04/2019. 

\bibliographystyle{abbrv}
\bibliography{ref}
\end{document}